\documentclass{amsart}

\usepackage[T1]{fontenc}
\usepackage[latin1]{inputenc}
\usepackage[english]{babel}
\usepackage{amsmath}
\usepackage{amsfonts}
\usepackage{amssymb}
\usepackage{amsthm}
\usepackage{enumitem}
\usepackage{hyperref}

\newtheorem{thm}{Theorem}[section]
\newtheorem{lem}[thm]{Lemma}
\newtheorem{prop}[thm]{Proposition}
\newtheorem{cor}[thm]{Corollary}

\theoremstyle{definition}
\newtheorem{defn}[thm]{Definition}

\theoremstyle{remark}
\newtheorem{rem}[thm]{Remark}
\newtheorem{exa}[thm]{Example}
\newtheorem*{rem*}{Remark}
\newtheorem*{exa*}{Example}

\newcommand{\N}{\mathbb N}
\newcommand{\C}{\mathbb C}
\newcommand{\polyring}[1]{\mathbb C[z_1,\ldots,z_{#1}]}
\newcommand{\polyrin}{\mathbb C[z]}
\newcommand{\F}{\mathcal F}
\newcommand{\A}{\mathcal A}
\newcommand{\D}{\mathcal D}
\newcommand{\K}{\mathcal K}
\newcommand{\B}{\mathbb B}
\DeclareMathOperator{\spa}{span}

\DeclareMathOperator{\ran}{ran}
\newcommand{\tfa}{\text{ for all }}
\newcommand{\tand}{\text{ and }}

\newcommand{\compmap}[1]{C_{ {#1}^*}}
\newcommand{\fock}[1]{\F(#1)}
\newcommand{\symfock}[1]{\F_s(#1)}
\newcommand{\fangle}{c}
\newcommand{\admideal}{admissible}
\newcommand{\clop}{\mathcal B}

\author{Michael Hartz}
\title[Isomorphisms for operator algebras]{Topological isomorphisms for some universal operator algebras}
\date{}
\address{Fachrichtung Mathematik, Universit\"at des Saarlandes, Postfach 151150, 66041 Saarbr\"ucken, Germany}
\email{hartz@math.uni-sb.de}
\subjclass[2000]{Primary 47L30; Secondary 47A13}
\keywords{Non-selfadjoint operator algebras; Isomorphism problem; Drury-Arveson space}

\begin{document}

\begin{abstract}
  Let $I \subset \mathbb C[z_1,\ldots,z_d]$ be a radical homogeneous ideal,
  and let $\mathcal A_I$ be the norm-closed non-selfadjoint algebra
  generated by the compressions of the $d$-shift on Drury-Arveson space
  $H^2_d$ to the co-invariant subspace $H^2_d \ominus I$.
  Then $\mathcal A_I$ is the universal operator algebra
  for commuting row contractions subject to the relations in $I$.
  We ask under which conditions are there topological isomorphisms
  between two such algebras $\mathcal A_I$ and $\mathcal A_J$?
  We provide a positive answer to a conjecture of Davidson, Ramsey and Shalit:
  $\mathcal A_I$ and $\mathcal A_J$ are topologically isomorphic if
  and only if there is an invertible linear map $A$ on $\mathbb C^d$
  which maps the vanishing locus of $J$ isometrically onto
  the vanishing locus of $I$.
  Most of the proof is devoted to showing
  that finite algebraic sums
  of full Fock spaces over subspaces of $\mathbb C^d$ are closed.
  This allows us to show that the map $A$ induces a
  completely bounded
  isomorphism between $\mathcal A_I$ and $\mathcal A_J$.
\end{abstract}

\maketitle

\section{Introduction}
  Let $H^2_d$ be the Drury-Arveson space, that is, the reproducing kernel Hilbert space
  on the open unital ball in $\C^d$ with reproducing kernel
  \begin{equation*}
    K(z,w) = \frac{1}{1-\langle z,w \rangle},
  \end{equation*}
  also known as symmetric Fock space.
  The coordinate functions $z_i$ are multipliers on $H^2_d$, that is,
  the multiplication operators
  \begin{equation*}
    M_{z_i} : H^2_d \to H^2_d, \quad f \mapsto z_i f
  \end{equation*}
  determine a commuting operator tuple
  $S = M_z = (M_{z_1}, \ldots , M_{z_d})$,
  which is known as the \emph{d-shift}. The tuple $S$ is a row contraction, and according
  to Arveson \cite{arv-s3} (see also \cite{popescu-isometries} and \cite{ep02}), the unital non-selfadjoint norm-closed algebra $\A_d$
  generated by $M_{z}$ is universal for row contractions,
  in the sense that whenever $T = (T_1,\ldots,T_d)$ is any commuting row contraction on a Hilbert space
  $H$, the algebra homomorphism
  \begin{equation*}
    \polyring{d} \to \clop(H), \quad p \mapsto p(T_1, \ldots, T_d)
  \end{equation*}
  extends to a completely contractive representation of $\A_d$.

  Recently, Davidson, Ramsey and Shalit \cite{davramshal} examined
  universal operator algebras
  for commuting row contractions which satisfy relations given by a homogeneous ideal
  $I \subset \polyring{d}$.
  Algebras of this type, even in a more general case, were already studied by Popescu \cite{popescu}.
  The universal object in this setting is the quotient
  algebra $\A_d / \overline{I}$,
  which is an abstract operator algebra in the sense of Blecher, Ruan and Sinclair
  (see for example \cite[Chapter 17]{effros}).
  Popescu's work \cite{popescu} shows that
  $\A_d / \overline{I}$ can also be identified with the concrete
  algebra of operators $\A_I$ obtained by compressing $\A_d$ to the co-invariant subspace
  \begin{equation*}
    \F_I = H^2_d \ominus I.
  \end{equation*}

  If $I$ is a radical homogeneous ideal, $\A_I$ can be regarded as an algebra
  of continuous functions on the intersection of the vanishing locus $V(I)$ of $I$ with
  the closed unit ball. In particular, $\A_I$ is a commutative semi-simple Banach algebra in this case.
  In \cite{davramshal}, the isomorphism problem for algebras $\A_I$ of this type,
  and non-commutative generalizations thereof, was investigated.
  In the commutative radical case, a close connection between the structure of the algebra $\A_I$ and the geometry
  of the vanishing locus $V(I)$ of $I$ was established.
  More precisely, the authors of \cite{davramshal} proved, building upon results
  due to Shalit and Solel \cite{shalit_solel},
  that for two radical homogeneous ideals
  $I$ and $J$ in $\polyring{d}$, the algebras $\A_I$ and $\A_J$ are completely
  isometrically isomorphic if and only if they are isometrically isomorphic, which in turn happens if and only
  if there is a unitary map $U$ on $\C^d$ mapping $V(I)$ onto $V(J)$.

  Moreover, Davidson, Ramsey and Shalit studied the existence of algebraic isomorphisms,
  which are the same as topological isomorphisms since the algebras $\A_I$ are semi-simple in the radical case.
  They showed that if $I \subset \polyring{d}$ and $J \subset \polyring{d'}$ are radical homogeneous ideals
  such that $\A_I$ and $\A_J$ are topologically isomorphic, then there exist two linear maps
  $A: \C^{d'} \to \C^{d}$ and $B: \C^{d} \to \C^{d'}$ which restrict to mutually inverse bijections $A: Z(J) \to Z(I)$
  and $B: Z(I) \to Z(J)$, where $Z(I) = V(I) \cap \overline{\B_d}$ and
  $Z(J) = V(J) \cap \overline{\B_{d'}}$.
  The converse of this fact was established in \cite{davramshal} for the case of
  tractable varieties,
  and was conjectured to be true in general. In fact, Davidson, Ramsey and Shalit reduced this problem
  to the case where $I$ and $J$ are vanishing ideals of unions of subspaces.
  To give an example, single subspaces and unions of two subspaces are always tractable. However, unions of three or more subspaces
  are not tractable in general.

  The aim of the present note is to
  prove the following theorem, which establishes the above conjecture in full generality.

\begin{thm}
  Let $I$ and $J$ be radical homogeneous ideals in $\polyring{d}$ and $\polyring{d'}$, respectively.
  The algebras $\mathcal A_I$ and $\mathcal A_J$ are isomorphic if and only if there exist linear maps
  $A: \C^{d'} \to \C^{d}$ and $B: \C^{d} \to \C^{d'}$ which restrict to mutually inverse
  bijections $A: Z(J) \to Z(I)$ and $B: Z(I) \to Z(J)$.
\end{thm}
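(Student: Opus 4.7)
The forward direction was proved by Davidson, Ramsey, and Shalit \cite{davramshal}, and by the reduction carried out in that paper the converse only needs to be treated when $V(I) = V_1 \cup \cdots \cup V_n \subset \C^d$ and $V(J) = W_1 \cup \cdots \cup W_n \subset \C^{d'}$ are finite unions of linear subspaces, with $A(W_i) = V_i$ and $B(V_i) = W_i$ for each $i$ after a relabeling. The candidate isomorphism $\Phi : \A_I \to \A_J$ is the unique continuous extension of the pullback $p + I \mapsto p \circ A + J$; this is well defined on polynomials because $A$ carries $V(J)$ into $V(I)$ and both ideals are radical, and its formal inverse is the analogous pullback by $B$. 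Everything therefore hinges on proving the norm estimate $\|p \circ A\|_{\A_J} \leq C \|p\|_{\A_I}$ for polynomials $p$.

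To establish this estimate I would lift the problem to the non-commutative setting via Popescu's dilation theory: $\A_I$ can be realized by compressing the algebra generated by the left creation operators on the full Fock space $\fock{\C^d}$ to a co-invariant subspace, and for a union $V_1 \cup \cdots \cup V_n$ that co-invariant subspace is precisely the algebraic sum $\sum_i \fock{V_i}$ sitting inside $\fock{\C^d}$. The linear map $A$ extends by tensor powers (after possibly rescaling it to a contraction, which is harmless because the vanishing loci are cones) to a bounded map $\bigoplus_i \fock{W_i} \to \bigoplus_i \fock{V_i}$ with norm controlled by $\|A\|$ and $n$. To turn this into a bound on $\Phi$ one must transfer from the direct sum to the algebraic sum $\sum_i \fock{V_i}$; provided that the summation map $\bigoplus_i \fock{V_i} \to \sum_i \fock{V_i}$ has closed range, the open mapping theorem supplies a bounded right inverse, and composing that right inverse with the tensor extension of $A$ and then symmetrizing and compressing back to Drury-Arveson space delivers the required estimate. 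The symmetric argument run through $B$ produces the inverse of $\Phi$ and upgrades it to a topological isomorphism.

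The main obstacle is thus to show that $\sum_i \fock{V_i}$ is norm closed in $\fock{\C^d}$, which is precisely the technical heart flagged in the abstract. For $n = 2$ the closedness reduces to a uniform bound on the Friedrichs angle between $\fock{V_1}$ and $\fock{V_2}$ at each tensor level, and is elementary. For $n \geq 3$, however, the pairwise angles inside $(\C^d)^{\otimes k}$ can in principle degenerate as $k$ grows, so one cannot merely iterate the two-subspace case. I would attack the closedness by induction on $n$, performing a graded analysis of $\fock{\C^d} = \bigoplus_k (\C^d)^{\otimes k}$ and reducing each inductive step to an intersection question about tensor subspaces at each homogeneous level; making the resulting angle or cosine estimates uniform in the tensor degree $k$ is where I expect most of the real work to lie.
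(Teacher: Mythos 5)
Your skeleton is correct and matches the paper's: the forward implication and the reduction to vanishing ideals of finite unions of subspaces come from Davidson--Ramsey--Shalit, the map $\Phi$ is the pullback by $A$, and you have correctly identified that the whole argument hinges on proving that the algebraic sum $\fock{V_1}+\cdots+\fock{V_r}$ is norm closed in $\fock{\C^d}$. But you do not actually prove this closedness, and the plan you sketch---induct on $r$ and obtain ``angle or cosine estimates uniform in the tensor degree $k$''---is not a workable argument as stated. At each tensor level $k$ one would need the Friedrichs angle between $V_1^{\otimes k}+\cdots+V_{r-1}^{\otimes k}$ and $V_r^{\otimes k}$ to be bounded away from $1$ uniformly in $k$; this is just a restatement of the problem, and the inductive hypothesis (closedness of sums of $r-1$ Fock spaces) does not by itself yield a uniform bound against the $r$-th space. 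Furthermore, the natural reduction to $V_1\cap\cdots\cap V_r=\{0\}$ does not propagate through such an induction: the $r-1$ subspaces appearing at the inductive step need not have trivial joint intersection even if the original $r$ do, which is precisely why the paper needs a separate reduction step, Lemma~\ref{lem:angle_several_sum_perp}.

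The paper fills the gap by a mechanism that is essentially absent from your proposal. After reducing to trivial joint intersection, it observes (Lemma~\ref{lem:proj_product_compact}) that every product of the projections $P_{\fock{V_i}}$ in which each index appears at least once is a compact operator. It then passes to the Calkin algebra, introduces the essential Friedrichs angle $\fangle_e$, and studies the unital $C^*$-algebra $\A$ generated by the Calkin classes $p_{\fock{V_i}}$. The key structural fact (Proposition~\ref{prop:special_representation}) is that every irreducible representation of $\A$ kills at least one generator, giving a faithful block representation $\pi=\bigoplus_i\pi_i$ in which $\pi_i(p_{\fock{V_i}})=0$; this, together with Lemma~\ref{lem:proj_in} showing that $p_{(M_1+\cdots+M_{r-1})\cap M_r}$ lies in $\A$, converts the $r$-subspace angle estimate into $r-1$-subspace estimates supplied by the inductive hypothesis. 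This $C^*$-algebraic and representation-theoretic step is the technical heart of the paper, not a level-by-level tensor estimate, and without something equivalent your argument for $r\ge 3$ is incomplete. (A smaller point: the paper works with $H^2_d$ as the symmetric part of $\fock{\C^d}$ and shows closedness of the full Fock sum implies closedness of the symmetric Fock sum via the symmetrization projection, rather than by invoking the noncommutative creation-operator picture; the distinction is cosmetic but your identification of the co-invariant subspace with an \emph{algebraic} sum of full Fock spaces is imprecise.)
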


  To this end, we proceed as follows.
  In Section 2, we show that to establish
  the conjecture, it is enough to prove
  that if $V_1, \ldots, V_r \subset \C^d$ are subspaces,
  then the algebraic sum of the full Fock spaces $\fock{V_1} + \ldots + \fock{V_r}$ is closed in
  $\fock{\C^d}$.
  This problem can be approached using the notion of the Friedrichs angle between subspaces
  of a Hilbert space. In Section 3, we recall the basic facts concerning this concept, and
  we introduce a variant of the Friedrichs angle using the Calkin algebra
  which is more suitable to our needs than the classical one.
  The main result of Section 4 is Lemma \ref{lem:angle_several_sum_perp}, which
  reduces the problem of showing closedness of $\fock{V_1} + \ldots + \fock{V_r}$
  to the case where $V_1 \cap \ldots \cap V_r = \{0\}$.
  Section 5 finally contains a proof of the closedness of $\fock{V_1} + \ldots + \fock{V_r}$.

\section{\texorpdfstring{Algebra isomorphisms and sums of Fock spaces}{Algebra isomorphisms and sums of Fock spaces}}

As usual, let $\polyring{d}$ denote the algebra of complex polynomials in $d$ variables. When $d$ is understood,
we will simply write $\polyrin$. If $n$ is a natural number, then $\polyrin_n$ will denote the space of homogeneous polynomials of
degree $n$.
For a radical homogeneous ideal $I \subset \polyrin$, let $\F_I = H^2_d \ominus I$ and let $\A_I \subset \clop(\F_I)$
be the norm-closed non-selfadjoint algebra generated by the compressions of $M_{z_i}$ to the co-invariant
subspace $\F_I$. The vanishing locus of $I$ will be denoted by $V(I)$, and
we will write $Z^0(I)$ (respectively $Z(I)$) for the intersection of
$V(I)$ with the open (respectively the closed) unit ball.
Moreover, for a subset $S$ of a vector space,
$\spa(S)$ will denote the linear span of $S$.

We follow the route of \cite{davramshal} and try to find isomorphisms between the Hilbert spaces
$\F_I$ such that conjugation with these isomorphisms yields algebra isomorphisms between the algebras
$\A_I$.
We begin by exhibiting a convenient generating set for the $n$th homogeneous part
$\F_I \cap \polyrin_n$
of $\F_I$ (compare the discussion preceding \cite[Lemma 7.11]{davramshal}).
\begin{lem}
  \label{lem:F_I_generating_set}
  Let $I \subset \polyrin$ be a radical homogeneous ideal. Then for all
  natural numbers $n$,
  \begin{equation*}
    \F_I \cap \polyrin_n
    = \spa \{ \langle \cdot,\lambda \rangle^n: \lambda \in Z^0(I) \}
    = \spa \{ \langle \cdot,\lambda \rangle^n: \lambda \in V(I) \}.
  \end{equation*}
\end{lem}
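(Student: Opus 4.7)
The plan is to exploit the reproducing kernel structure of $H^2_d$ together with homogeneity of $I$ and the Hilbert Nullstellensatz. First I would note that since $I$ is homogeneous, the decomposition $H^2_d = I \oplus \F_I$ respects the grading by degree, so $\F_I \cap \polyrin_n$ is precisely the orthogonal complement of $I_n := I \cap \polyrin_n$ inside $\polyrin_n$. The key identity is obtained by expanding the reproducing kernel
\begin{equation*}
  K_\lambda(z) = \frac{1}{1-\langle z,\lambda\rangle} = \sum_{n \geq 0} \langle z,\lambda\rangle^n,
\end{equation*}
and observing that homogeneous polynomials of different degrees are orthogonal in $H^2_d$; hence for every $p \in \polyrin_n$ and $\lambda \in \C^d$,
\begin{equation*}
  \langle p,\, \langle \cdot,\lambda\rangle^n \rangle = \langle p, K_\lambda\rangle = p(\lambda).
\end{equation*}

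Using this identity, the second equality $\F_I \cap \polyrin_n = \spa\{\langle \cdot,\lambda\rangle^n : \lambda \in V(I)\}$ splits into two routine inclusions. For $\supset$, given $\lambda \in V(I)$ and $q \in I_n$, the identity yields $\langle q,\,\langle \cdot,\lambda\rangle^n\rangle = q(\lambda) = 0$, so $\langle \cdot,\lambda\rangle^n \in I_n^\perp = \F_I \cap \polyrin_n$. For the reverse inclusion I would argue by contradiction: if some $p \in \F_I \cap \polyrin_n$ is orthogonal to every $\langle \cdot,\lambda\rangle^n$ with $\lambda \in V(I)$, then $p(\lambda) = 0$ for all $\lambda \in V(I)$. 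By Hilbert's Nullstellensatz this forces $p \in \sqrt{I} = I$ (this is the one place where radicality of $I$ is essential), and homogeneity of $p$ then gives $p \in I_n$. But $p$ also lies in $I_n^\perp$, so $p = 0$.

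The first equality $\spa\{\langle \cdot,\lambda\rangle^n : \lambda \in Z^0(I)\} = \spa\{\langle \cdot,\lambda\rangle^n : \lambda \in V(I)\}$ follows immediately from the cone structure of $V(I)$: since $I$ is homogeneous, $t\lambda \in V(I)$ whenever $\lambda \in V(I)$ and $t \in \C$, and we have the scaling relation $\langle \cdot, t\lambda\rangle^n = \overline{t}^n \langle \cdot,\lambda\rangle^n$. Hence every nonzero $\lambda \in V(I)$ can be rescaled into the open unit ball without changing the one-dimensional subspace it generates, while $\lambda = 0$ contributes the zero vector for $n \geq 1$ and $1$ itself for $n = 0$ (and $0 \in Z^0(I)$ whenever $I$ is proper). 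I do not anticipate a genuine obstacle; the only point worth emphasizing is that radicality is used precisely to invoke the Nullstellensatz in the second inclusion above.
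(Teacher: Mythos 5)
Your proof is correct and follows essentially the same route as the paper: expand the reproducing kernel $K_\lambda$, use orthogonality of homogeneous parts to get $\langle p,\langle\cdot,\lambda\rangle^n\rangle=p(\lambda)$, and invoke the Nullstellensatz (with radicality) for the reverse inclusion. The only small wrinkle is that your derivation of this pairing identity goes through $K_\lambda\in H^2_d$, which presupposes $\lambda\in\B_d$, yet you state and use it for arbitrary $\lambda\in\C^d$ (e.g.\ for $\lambda\in V(I)\setminus\overline{\B_d}$); this is harmless since the identity extends by the homogeneity $\langle\cdot,t\lambda\rangle^n=\overline{t}^n\langle\cdot,\lambda\rangle^n$ (or by a one-line direct computation with monomials), which is exactly the scaling you already invoke for the first equality -- but you should either extend the identity explicitly or, as the paper does, argue first with $Z^0(I)$ and then pass to $V(I)$.
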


\begin{proof}
 Note that for any $\lambda \in \B_d$, we have
 \begin{equation*}
  K(\cdot,\lambda) = \sum_{n=0}^\infty \langle \cdot,\lambda \rangle^n \in H^2_d,
 \end{equation*}
 where $K$ is the reproducing kernel of $H^2_d$.
 Using that homogeneous polynomials of different degree are orthogonal
 in $H^2_d$, we obtain for $\lambda \in Z^0(I)$ and $f \in \polyrin_n$ the identity
 \begin{equation*}
  \Big\langle f, \langle \cdot,\lambda \rangle^n  \Big\rangle_{H^2_d}
  = \Big\langle f, K(\cdot,\lambda)   \Big\rangle_{H^2_d} = f(\lambda).
 \end{equation*}
 In particular, if $f \in I \cap \polyrin_n$ and $\lambda \in Z^0(I)$, then
 \begin{equation*}
  \Big\langle f, \langle \cdot, \lambda \rangle^n \Big\rangle_{H^2_d} = 0,
 \end{equation*}
 hence $\langle \cdot,\lambda \rangle^n \in \F_I$. Conversely, if
 $g \in \F_I \cap \polyrin_n$ is orthogonal to each
 $\langle \cdot,\lambda \rangle^n$
 for $\lambda \in Z^0(I)$, then $g$ vanishes on $Z^0(I)$. By homogeneity of $I$
 and $g$, we infer that $g$ vanishes on $V(I)$, hence $g \in I$ by Hilbert's Nullstellensatz.
 Consequently, $g=0$, from which the first equality follows, while the second is obvious.
\end{proof}

Suppose now that $I \subset \polyring{d}$ and $J \subset \polyring{d'}$ are radical homogeneous
ideals and that $A: \C^{d'} \to \C^{d}$ is a linear map which maps $V(J)$ into $V(I)$.
It is an easy consequence of the homogeneity of $J$ that $\D_J = \F_J \cap \polyring{d'}$ is a dense
subspace of $\F_J$.
Since
\begin{equation}
  \label{eqn:com_A_skal_prod}
  \langle \cdot,\lambda \rangle^n \circ A^* = \langle \cdot, A \lambda \rangle^n
\end{equation}
for all $\lambda \in \C^{d'}$ and $n \in \N$, we conclude with the help of the preceding lemma that $A$
induces a densely defined linear map
\begin{equation*}
  \F_J \supset \D_J \to \F_I, \quad f \mapsto f \circ A^*.
\end{equation*}
The crucial problem is to determine when this map is bounded. If $J$ is the vanishing ideal
of a single subspace $V \subset \C^{d'}$ and $A$ is isometric on $V$, then the map is in fact isometric.
This follows from results in \cite{davramshal}.
For the convenience of the reader, a proof is provided below.

\begin{lem}
  \label{lem:comp_single_subspace}
  Let $V \subset \C^{d'}$ be a subspace and let $J \subset \polyring{d'}$ be its vanishing ideal.
  If $A: \C^{d'} \to \C^d$ is a linear map which is isometric on $V$, then
  \begin{equation*}
    \compmap{A}: \F_J \supset \D_J \to H^2_d, \quad f \mapsto f \circ A^*
  \end{equation*}
  is an isometry.
\end{lem}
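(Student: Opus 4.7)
The plan is to reduce the claim to a computation on homogeneous pieces and invoke Lemma \ref{lem:F_I_generating_set}. Since $J$ is homogeneous, $\D_J$ decomposes as the algebraic direct sum $\bigoplus_n \F_J \cap \polyring{d'}_n$, where the summands are pairwise orthogonal in $H^2_{d'}$. Moreover, composition with $A^*$ sends a homogeneous polynomial of degree $n$ to a homogeneous polynomial of degree $n$, so $\compmap{A}$ maps the different summands into pairwise orthogonal degree-$n$ homogeneous subspaces of $H^2_d$. It therefore suffices to show that $\compmap{A}$ is isometric on each $\F_J \cap \polyring{d'}_n$ separately.

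On a fixed homogeneous piece, I would use Lemma \ref{lem:F_I_generating_set} (together with the fact that $V(J) = V$) to write
\begin{equation*}
  \F_J \cap \polyring{d'}_n = \spa\{\langle \cdot,\lambda\rangle^n : \lambda \in V\}.
\end{equation*}
The argument in the proof of that lemma also yields the identity $\langle \langle \cdot,\mu\rangle^n,\langle \cdot,\lambda\rangle^n\rangle_{H^2_{d'}} = \langle \lambda,\mu\rangle^n$, by taking $f = \langle \cdot,\mu\rangle^n$ and noting that $K(\cdot,\lambda)$ has $\langle \cdot,\lambda\rangle^n$ as its degree-$n$ component, with an analogous formula in $H^2_d$. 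Combined with equation (\ref{eqn:com_A_skal_prod}), this shows that the Gram matrix of the spanning set $\{\langle \cdot,\lambda\rangle^n : \lambda \in V\}$ in $H^2_{d'}$ has entries $\langle \lambda,\mu\rangle^n$, while the Gram matrix of its image $\{\langle \cdot,A\lambda\rangle^n : \lambda \in V\}$ in $H^2_d$ has entries $\langle A\lambda, A\mu\rangle^n$.

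The hypothesis that $A$ is isometric on $V$ means, by polarization, that $\langle A\lambda, A\mu\rangle = \langle \lambda,\mu\rangle$ for all $\lambda,\mu \in V$, so the two Gram matrices coincide. Hence the norms of arbitrary finite linear combinations are preserved by $\compmap{A}$, giving the isometry on each $\F_J \cap \polyring{d'}_n$ and, by the orthogonal decomposition, on all of $\D_J$. There is no real obstacle here: the entire argument is a two-line Gram-matrix calculation built on Lemma \ref{lem:F_I_generating_set}, the elementary identity $\langle \langle \cdot,\mu\rangle^n,\langle \cdot,\lambda\rangle^n\rangle = \langle \lambda,\mu\rangle^n$, and the fact that inner products are preserved under an isometry of $V$.
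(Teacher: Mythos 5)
Your proof is correct and follows essentially the same route as the paper: both arguments compute the inner products of the spanning kernel powers $\langle \cdot,\lambda\rangle^n$ on the source and target sides and observe that they agree because $A$ preserves inner products on $V$ (polarization). The paper handles the cross-degree terms with a Kronecker delta $\delta_{kn}$ rather than first orthogonally decomposing $\D_J$ into its homogeneous pieces, but these are trivially equivalent formulations of the same Gram-matrix computation.
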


\begin{proof}
  Let $\lambda,\mu \in V \cap \B_{d'}$ and $k,n \in \N$ be arbitrary.
  Using the homogeneous decomposition of the reproducing kernel $K$
  of $H^2_d$, we see that
  \begin{align*}
    \big\langle \compmap{A} (\langle \cdot, \lambda \rangle^n) ,
      \compmap{A} (\langle \cdot, \mu \rangle^k) \big\rangle_{H^2_d}
    &=
    \big\langle \langle \cdot, A \lambda \rangle^n ,
      \langle \cdot, A \mu \rangle^k \big\rangle_{H^2_d} \\
    &=
    \delta_{kn} \big\langle \langle \cdot, A \lambda \rangle^n ,
      K(\cdot,A \mu) \big\rangle_{H^2_d} \\
    &= \delta_{kn} \, \langle A \mu, A \lambda \rangle^n
    = \delta_{kn} \, \langle \mu, \lambda \rangle^n.
  \end{align*}
  Similarly,
  \begin{equation*}
    \big\langle \langle \cdot, \lambda \rangle^n ,
      \langle \cdot, \mu \rangle^k \big\rangle_{H^2_{d'}}
      = \delta_{kn} \, \langle \mu, \lambda \rangle^n.
  \end{equation*}
  Since $\D_J$ is linearly spanned by polynomials of the form $\langle \cdot,\lambda \rangle^n$
  with $\lambda \in V \cap \B_{d'}$ and $n \in \N$ by the preceding lemma, we conclude that $\compmap{A}$ is isometric.
\end{proof}

When considering more complicated algebraic sets such as unions of subspaces, one of course
wishes to decompose the sets into smaller pieces which are easier to deal with.
Algebraically, this corresponds
to writing an ideal as an intersection of larger ideals. On the level of the
spaces $\F_I$, we get the following result.

\begin{lem}
  \label{lem:ideal_dec_F}
  Let $J_1, \ldots,J_r \subset \polyrin$ be homogeneous ideals and let
  $J = J_1 \cap \ldots \cap J_r$. Then
  \begin{equation*}
      \overline{J} = \overline{J_1} \cap \ldots \cap \overline{J_r},
  \end{equation*}
  and
  \begin{equation*}
    \F_J = \overline{\F_{J_1} + \ldots + \F_{J_r}}.
  \end{equation*}
\end{lem}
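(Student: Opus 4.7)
The plan is to deduce both identities from the homogeneous structure of the ideals together with standard Hilbert space duality.

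For the first identity, the inclusion $\overline{J} \subset \overline{J_i}$ holds trivially from $J \subset J_i$, giving one direction. For the reverse, I would use that the homogeneous polynomials of distinct degrees are mutually orthogonal in $H^2_d$, so $H^2_d$ is the Hilbert-space direct sum $\bigoplus_n \polyrin_n$. Since each $J_i$ is homogeneous, it decomposes algebraically as $J_i = \bigoplus_n (J_i \cap \polyrin_n)$, and because each $\polyrin_n$ is finite-dimensional, this orthogonal decomposition passes to the closure: $\overline{J_i} = \bigoplus_n (J_i \cap \polyrin_n)$ as a Hilbert direct sum. Therefore, if $f = \sum_n f_n \in \bigcap_i \overline{J_i}$ with $f_n \in \polyrin_n$, each homogeneous component $f_n$ lies in $J_i \cap \polyrin_n$ for every $i$, hence in $(\bigcap_i J_i) \cap \polyrin_n = J \cap \polyrin_n$, which gives $f \in \overline{J}$.

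For the second identity, I would invoke the standard Hilbert-space fact that for any family of closed subspaces $M_1,\ldots,M_r$ of a Hilbert space,
\begin{equation*}
  (M_1 \cap \ldots \cap M_r)^\perp = \overline{M_1^\perp + \ldots + M_r^\perp}.
\end{equation*}
Applying this with $M_i = \overline{J_i}$, and using the definition $\F_I = H^2_d \ominus \overline{I} = \overline{I}^\perp$, yields
\begin{equation*}
  \F_J = \overline{J}^\perp = \big(\overline{J_1} \cap \ldots \cap \overline{J_r}\big)^\perp = \overline{\F_{J_1} + \ldots + \F_{J_r}},
\end{equation*}
where the middle equality is precisely the first part of the lemma.

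I do not anticipate a genuine obstacle here: the only point requiring a small verification is that $\overline{J_i}$ truly decomposes as the orthogonal Hilbert sum of its finite-dimensional homogeneous components, which follows at once from finite-dimensionality of $\polyrin_n$ and orthogonality between different degrees. Everything else is formal.
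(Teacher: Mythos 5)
Your proof is correct and follows essentially the same route as the paper: prove the first identity by observing that each homogeneous component of an element of $\overline{J_k}$ lies in $J_k$, and deduce the second by passing to orthogonal complements. You merely spell out the finite-dimensionality argument for the graded decomposition of $\overline{J_i}$ a bit more explicitly than the paper does.
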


\begin{proof}
  It suffices to prove the first claim, since the second will then follow
  by taking orthogonal complements.
  To this end, note that the inclusion
  $\overline{J} \subset \overline{J_1} \cap \ldots
  \cap \overline{J_r}$
  is trivial.
  Conversely, it is an easy consequence of the homogeneity of the $J_k$
  that, for any element $f \in \overline{J_k}$ with homogeneous expansion
  \begin{equation*}
    f = \sum_{n=0}^\infty f_n,
  \end{equation*}
  each $f_n$ is contained in $J_k$, from which the reverse inclusion readily follows.
\end{proof}

The question under which conditions the sum $\F_{J_1} + \ldots + \F_{J_r}$ in the preceding lemma
is itself closed will be of central importance.
In general, $\F_{J_1} + \F_{J_2}$ need not be closed for two radical homogeneous ideals $J_1$ and $J_2$,
see Example \ref{exa:sum_not_closed} below.
But thanks to the reduction to unions of subspaces in \cite{davramshal},
we only need to consider the case where the $J_k$ are
vanishing ideals of subspaces in $\C^d$.

To keep the statements of the following results reasonably short, we make an ad-hoc definition which
will only be used in this section.
\begin{defn}
  Let $J \subset \polyring{d}$ be a radical homogeneous ideal, and let $V(J) = W_1 \cup \ldots
  \cup W_r$ be the decomposition of $V(J)$ into irreducible components.
  Denote the vanishing ideal of $\spa{W_k}$ by $\widehat J_k$. We call $J$
  \emph{\admideal} if the algebraic sum $\F_{\widehat J_{1}} + \ldots + \F_{\widehat J_r}$
  is closed.
\end{defn}

\begin{prop}
  \label{prop:good_bounded_maps}
  Let $I$ and $J$ be radical homogeneous ideals in $\polyring{d}$ and $\polyring{d'}$, respectively.
  Suppose that there is a linear map $A: \C^{d'} \to \C^{d}$ that maps $Z(J)$ bijectively onto
  $Z(I)$. If $J$ is \admideal, then
  \begin{equation*}
    \F_J \supset \D_J \to \F_I, \quad f \mapsto f \circ A^*
  \end{equation*}
  is a bounded map.
\end{prop}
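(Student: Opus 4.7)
The plan is to use admissibility to decompose $\F_J$ into pieces $\F_{\widehat J_k}$ on which the map $g \mapsto g \circ A^*$ extends contractively via a second quantization argument, and then assemble the pieces.

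Write $V_k = \spa(W_k)$, so that $\widehat J_k$ is the vanishing ideal of the subspace $V_k$. Because $V(\widehat J_k) = V_k \supset W_k$, Hilbert's Nullstellensatz yields $\widehat J_1 \cap \cdots \cap \widehat J_r \subset J$, and Lemma \ref{lem:ideal_dec_F} combined with admissibility gives
\begin{equation*}
  \F_J \subset \F_{\widehat J_1 \cap \cdots \cap \widehat J_r} = \F_{\widehat J_1} + \cdots + \F_{\widehat J_r},
\end{equation*}
the right-hand side being closed. An application of the open mapping theorem to the surjection $\bigoplus_k \F_{\widehat J_k} \to \F_{\widehat J_1} + \cdots + \F_{\widehat J_r}$, $(g_k) \mapsto \sum_k g_k$, then yields a constant $C$ such that every $f$ in this sum admits a decomposition $f = g_1 + \cdots + g_r$ with $g_k \in \F_{\widehat J_k}$ and $\sum_k \|g_k\|^2 \le C^2 \|f\|^2$.

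Next I would construct, for each $k$, a contractive operator $\Phi_k : \F_{\widehat J_k} \to \F_I$ extending $g \mapsto g \circ A^*$. The restriction $A|_{V_k}$ is a contraction into $V(I)$: since $A(V_k \cap \overline{\B_{d'}}) \subset Z(I) \subset \overline{\B_d}$ and $V(I)$ is a cone, this follows by linearity. For homogeneous $g \in \F_{\widehat J_k} \cap \polyrin_n$, Lemma \ref{lem:F_I_generating_set} lets me write $g = \sum_j c_j \langle \cdot, \lambda_j \rangle^n$ with $\lambda_j \in V_k$; a Gram-matrix computation then identifies $\|g\|^2$ and $\|g \circ A^*\|^2$ with $\|v\|^2$ and $\|(A|_{V_k})^{\otimes n} v\|^2$ for $v = \sum_j c_j \lambda_j^{\otimes n} \in V_k^{\otimes n}$. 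Since $A|_{V_k}$ is a contraction, so is its $n$-fold tensor power, giving $\|g \circ A^*\| \le \|g\|$. Extending along the homogeneous decomposition $\F_{\widehat J_k} = \bigoplus_n (\F_{\widehat J_k} \cap \polyrin_n)$ produces $\Phi_k$, and Lemma \ref{lem:F_I_generating_set} shows that its image lies in $\F_I$.

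To conclude, I assemble a map $\Phi : \F_{\widehat J_1} + \cdots + \F_{\widehat J_r} \to \F_I$ by $\Phi(\sum_k g_k) = \sum_k \Phi_k(g_k)$ and restrict it to $\F_J$. The step I anticipate to be the most delicate is the well-definedness of $\Phi$: if $\sum_k g_k = 0$, one must show $\sum_k \Phi_k(g_k) = 0$. I would verify this by passing to homogeneous parts $g_{k,n} \in \F_{\widehat J_k} \cap \polyrin_n$, where each $\Phi_k$ acts literally by composition with $A^*$, so that the polynomial identity $\sum_k g_{k,n} = 0$ forces $\sum_k g_{k,n} \circ A^* = 0$; since each $\Phi_k$ preserves degree, this gives the desired vanishing homogeneous-part by homogeneous-part. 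Boundedness of $\Phi$ then follows from the open mapping estimate combined with contractivity of the $\Phi_k$, and $\Phi$ agrees with $f \mapsto f \circ A^*$ on $\D_J$ because any polynomial $f \in \D_J$ admits a polynomial decomposition $f = \sum_k g_k$ with $g_k \in \F_{\widehat J_k} \cap \polyrin$, obtained degree-by-degree by homogeneity.
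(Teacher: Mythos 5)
There is a genuine gap at the very step you flag as routine, namely the claim that $A|_{V_k}$ is a contraction. You justify this by asserting $A(V_k \cap \overline{\B_{d'}}) \subset Z(I)$, but the only hypothesis is that $A$ maps $Z(J) = V(J) \cap \overline{\B_{d'}}$ onto $Z(I)$. The span $V_k = \spa(W_k)$ of an irreducible component is in general strictly larger than $W_k$, and $V_k \cap \overline{\B_{d'}}$ is not contained in $Z(J)$; points of $V_k$ outside $W_k$ need not lie in $V(J)$ at all, so the hypothesis says nothing about where $A$ sends them. A linear map can be norm-contractive on a homogeneous variety $W_k$ without being contractive on $\spa(W_k)$, so no elementary cone argument closes this.

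The paper resolves precisely this by citing \cite[Lemma 7.5 and Proposition 7.6]{davramshal}, a non-trivial result which shows that a linear bijection $Z(J) \to Z(I)$ is automatically isometric on $S = V_1 \cup \cdots \cup V_r$. That is the input that lets Lemma \ref{lem:comp_single_subspace} be applied to each $\widehat J_k$. Once isometry on each $V_k$ is in hand, the rest of your argument --- dense domain, open-mapping constant from admissibility, degree-by-degree decomposition --- is essentially the paper's proof. Your detour of building a globally defined $\Phi$ and checking its well-definedness is unnecessary: the paper simply bounds $f \mapsto f \circ A^*$ directly on the dense subspace $\D_{\widehat J}$ of homogeneous polynomials, where the decomposition $f = f_1 + \cdots + f_r$ can be taken homogeneous and the map is literally composition, so no well-definedness issue arises.
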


\begin{proof}
  Let $V(J) = W_1 \cup \ldots \cup W_r$ be the irreducible decomposition of $V(J)$,
  and let $\widehat J_k$ be the vanishing ideal of $\spa(W_k)$. Define
  \begin{equation*}
    S = \spa (W_1) \cup \ldots \cup \spa(W_r),
  \end{equation*}
  and denote the vanishing ideal of $S$ by $\widehat J$, so that $\widehat J = \widehat J_1 \cap
  \ldots \cap \widehat J_r$. Since $\D_{J} \subset \D_{\widehat J}$, it suffices to show that
  $f \mapsto f \circ A^*$ defines a bounded map on $\D_{\widehat J}$.
  By Lemma \ref{lem:ideal_dec_F}, we have
  \begin{equation*}
    \F_{\widehat J} = \overline{\F_{\widehat J_1} + \ldots + \F_{\widehat J_r}}.
  \end{equation*}
  By Lemma 7.5 and Proposition 7.6 in \cite{davramshal}, the linear map $A$ is isometric
  on $S$. Consequently, Lemma \ref{lem:comp_single_subspace} shows that
  $f \mapsto f \circ A^*$ defines an isometry on each $D_{\widehat J_k} \subset
  \F_{\widehat J_k}$.
  We will use the hypothesis that $J$ is \admideal\ in order to show that
  $f \mapsto f \circ A^*$ defines a bounded map on $\D_{\widehat J}$. To this end,
  we note that since $\F_{\widehat J_1} + \ldots + \F_{\widehat J_r}$ is closed,
  a standard application of the open mapping theorem yields a constant $C \ge 0$
  such that for any $f \in \F_{\widehat J}$, there are $f_k \in \F_{\widehat J_k}$
  with $f = f_1 + \ldots + f_r$ and
  \begin{equation*}
    ||f_1||^2 + \ldots + ||f_r||^2 \le C ||f||^2.
  \end{equation*}
  If $f$ is a homogeneous polynomial of degree $n$, we can choose the $f_k$
  to be homogeneous polynomials of degree $n$ as well.
  Consequently, if $f \in \D_{\widehat J}$, the $f_k$
  can be chosen from $\D_{\widehat J_k}$. With such a choice, we obtain for
  $f \in \D_{\widehat J}$ the (crude) estimate
  \begin{align*}
    ||f \circ A^*||^2 &= ||f_1 \circ A^* + \ldots + f_r \circ A^*||^2 \\
    &\le r^2 \max_{1 \le k \le r} ||f_k \circ A^*||^2 \\
    &= r^2 \max_{1 \le k \le r} ||f_k||^2
    \le C r^2  ||f||^2,
  \end{align*}
  where we have used that $f \mapsto f \circ A^*$ is an isometry on each $\D_{\widehat J_k}$.
\end{proof}

In the setting of the preceding proposition, let $\compmap{A}: \F_J \to \F_I$ be
the continuous extension of $f \mapsto f \circ A^{*}$ onto $\F_J$.
Taking the homogeneous expansion of the kernel functions $K(\cdot,\lambda)$ into account, we infer
from \eqref{eqn:com_A_skal_prod} that
$\compmap{A}$ satisfies
\begin{equation*}
  \compmap{A}( K(\cdot,\lambda)) = K(\cdot,A \lambda) \quad \tfa \lambda \in Z^{0}(J).
\end{equation*}
The existence of topological isomorphisms between $\A_I$
and $\A_J$ if $I$ and $J$ are \admideal\ now follows exactly as in the proof
of \cite[Theorem 7.17]{davramshal}.

\begin{cor}
  \label{cor:good_algebra_iso}
  Let $I$ and $J$ be radical homogeneous ideals in $\polyring{d}$ and $\polyring{d'}$, respectively.
  Suppose that there are linear maps $A: \C^{d'} \to \C^d$ and $B: \C^d \to \C^{d'}$ which
  restrict to mutually inverse bijections $A: Z(J) \to Z(I)$ and $B: Z(I) \to Z(J)$.
  If $I$ and $J$ are \admideal,
  then $\compmap{A}$ and $\compmap{B}$ are inverse to each other, and
  \begin{equation*}
    \Phi: \A_I \to \A_J, \quad T \mapsto (\compmap{A})^* T  (\compmap{B})^*,
  \end{equation*}
  is a completely bounded isomorphism. Regarding $\A_I$ and $\A_J$ as function
  algebras on $Z(I)$ and $Z(J)$, respectively, $\Phi$ is given by composition with $A$, that is,
  \begin{equation*}
    \Phi (\varphi) = \varphi \circ A \quad \tfa \varphi \in \A_I.
    \eqno\qed
  \end{equation*}
\end{cor}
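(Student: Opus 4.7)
The plan is as follows. Admissibility of $I$ and $J$ combined with Proposition~\ref{prop:good_bounded_maps} applied to both $A$ and $B$ yields that the maps $f \mapsto f \circ A^*$ and $f \mapsto f \circ B^*$ extend by continuity to bounded operators $\compmap{A}\colon \F_J \to \F_I$ and $\compmap{B}\colon \F_I \to \F_J$. To show that these are mutually inverse, I would argue by density: Lemma~\ref{lem:F_I_generating_set} shows that $\D_J$ is linearly spanned by the monomials $\langle \cdot,\lambda\rangle^n$ with $\lambda \in V(J)$ and $n \in \N$, so it suffices to verify the identity on such an element. Applying \eqref{eqn:com_A_skal_prod} twice,
\begin{equation*}
  \compmap{B} \compmap{A} \big(\langle \cdot,\lambda\rangle^n\big)
  = \langle \cdot, BA\lambda \rangle^n
  = \langle \cdot, \lambda \rangle^n,
\end{equation*}
where the last equality uses that $B \circ A$ coincides with the identity on $Z(J)$, hence, by linearity, on the cone $V(J)$. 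The reverse composition is handled symmetrically.

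Passing to adjoints, $V := \compmap{A}^*\colon \F_I \to \F_J$ is a bounded linear bijection with inverse $\compmap{B}^*$, so $\Phi(T) = V T V^{-1}$ is conjugation by an invertible operator and therefore a completely bounded algebra homomorphism $\clop(\F_I) \to \clop(\F_J)$ with completely bounded inverse. To show that $\Phi$ restricts to an isomorphism $\A_I \to \A_J$ acting as composition with $A$, I would exploit that each $T \in \A_I$ corresponds to a continuous function $\widehat T$ on $Z(I)$ satisfying $T^* K(\cdot,\nu) = \overline{\widehat T(\nu)} K(\cdot,\nu)$ for all $\nu \in Z^0(I)$. Combining this with the displayed identity $\compmap{A}(K(\cdot,\mu)) = K(\cdot,A\mu)$ for $\mu \in Z^0(J)$ gives
\begin{equation*}
  \Phi(T)^* K(\cdot,\mu)
  = \compmap{B} T^* \compmap{A} K(\cdot,\mu)
  = \overline{\widehat T(A\mu)} \, K(\cdot,\mu),
\end{equation*}
which identifies $\Phi(T)$ with the function $\widehat T \circ A$ on $Z(J)$. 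When $T$ is a polynomial in the compressed coordinate operators on $\F_I$, both $\widehat T$ and $\widehat T \circ A$ are polynomials, so $\Phi(T) \in \A_J$; density of such polynomials in $\A_I$ together with continuity of $\Phi$ yields $\Phi(\A_I) \subset \A_J$, and the same argument applied to $\Phi^{-1}$ gives equality.

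The substantive difficulty is already absorbed into the admissibility hypothesis, whose verification hinges on the closedness of sums of Fock spaces proved in the later sections of the paper; once boundedness of $\compmap{A}$ and $\compmap{B}$ is in hand, the rest of the corollary is routine bookkeeping with reproducing kernels.
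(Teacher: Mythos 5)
Your proposal is correct and follows essentially the same route the paper takes: Proposition~\ref{prop:good_bounded_maps} gives bounded extensions $\compmap{A}$ and $\compmap{B}$, the kernel identity $\compmap{A}K(\cdot,\mu)=K(\cdot,A\mu)$ combined with $BA=\id$ on the cone $V(J)$ shows they are inverse, conjugation by the invertible operator $\compmap{A}^*$ is completely bounded, and evaluating on reproducing kernels identifies $\Phi$ with $\varphi\mapsto\varphi\circ A$; density of polynomial compressions then gives $\Phi(\A_I)\subset\A_J$ and, symmetrically, equality. The paper itself simply invokes the proof of Theorem~7.17 in \cite{davramshal} at this point, and your proposal faithfully reconstructs that argument.
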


To improve the corresponding results from \cite{davramshal}, we will show that
every radical homogeneous ideal $I \subset \polyring{d}$ is automatically \admideal.
To this end, we will work with
the description of Drury-Arveson space as symmetric Fock space, rather than as a
Hilbert function space. We begin by recalling some standard definitions.

For a finite dimensional Hilbert space $E$, let
\begin{equation*}
  \fock{E} = \bigoplus_{n=0}^\infty E^{\otimes n}
\end{equation*}
be the full Fock space over $E$.
Note that if $V \subset E$ is a subspace, we can regard $\fock{V}$ as a subspace of
$\fock{E}$, and the orthogonal projection from $\fock{E}$ onto $\fock{V}$ is given by
\begin{equation*}
  P_{\fock{V}} = \bigoplus_{n=0}^\infty (P_V)^{\otimes n}.
\end{equation*}
Let $E^n \subset E^{\otimes n}$ denote
the $n$-fold symmetric tensor power of $E$, and write
\begin{equation*}
  \symfock{E} = \bigoplus_{n=0}^\infty E^{n} \subset \fock{E}
\end{equation*}
for the symmetric Fock space over $E$.
Then $H^2_d$ can be identified with $\symfock{\C^d}$
via an anti-unitary map $U: H^2_d \to \symfock{\C^d}$, which is uniquely determined by
\begin{equation}
  \label{eqn:DA_Fock}
  U (\langle \cdot,\lambda \rangle^{n}) = \lambda^{\otimes n}
\end{equation}
for all $\lambda \in \C^d$ and $n \in \N$ (see \cite[Section 1]{arv-s3}).

This identification allows us to translate the condition that the ideals $I$ and $J$ be \admideal\
in terms of symmetric Fock space. In fact, working with full Fock space suffices.

\begin{lem}
  \label{lem:full_Fock_good}
  Let $J \subset \polyring{d}$ be a radical homogeneous ideal, and let
  \begin{equation*}
    V(J) = W_1 \cup \ldots \cup W_r
  \end{equation*}
  be the irreducible decomposition of $V(J)$. Let $V_k = \spa{W_k}$.
  If the algebraic sum of the full Fock spaces $\fock{V_1} + \ldots + \fock{V_r}$ is closed,
  then $J$ is \admideal.
\end{lem}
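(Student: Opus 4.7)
The plan is to transfer the problem from Drury--Arveson space to symmetric Fock space via the anti-unitary $U$ of \eqref{eqn:DA_Fock}, and then to realize the symmetric sum as the intersection of the full Fock sum with $\symfock{\C^d}$.

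First I would identify $U(\F_{\widehat J_k})$ explicitly. Since $\widehat J_k$ is the vanishing ideal of $V_k = \spa W_k$, Lemma \ref{lem:F_I_generating_set} gives
\begin{equation*}
  \F_{\widehat J_k} \cap \polyrin_n = \spa \{\langle \cdot,\lambda\rangle^n : \lambda \in V_k\},
\end{equation*}
and applying $U$ sends this to $\spa\{\lambda^{\otimes n} : \lambda \in V_k\} = V_k^{n}$. Summing over $n$ yields $U(\F_{\widehat J_k}) = \symfock{V_k}$, and since $U$ is anti-unitary it preserves closedness of subspaces. Hence $\F_{\widehat J_1} + \ldots + \F_{\widehat J_r}$ is closed in $H^2_d$ if and only if $\symfock{V_1} + \ldots + \symfock{V_r}$ is closed in $\symfock{\C^d}$.

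Next I would pass from symmetric to full Fock space. The symmetrizer projection $P_{\symfock{\C^d}}$ acts on each level $(\C^d)^{\otimes n}$ by averaging over permutations, while $P_{\fock{V_k}}$ acts by $(P_{V_k})^{\otimes n}$; since any tensor power $A^{\otimes n}$ commutes with permutations of the factors, these two projections commute. Their product is therefore the projection onto $\fock{V_k} \cap \symfock{\C^d} = \symfock{V_k}$. Applied levelwise to a decomposition $x = x_1 + \ldots + x_r$ with $x_k \in \fock{V_k}$, this gives $P_{\symfock{\C^d}} x_k \in \symfock{V_k}$, and therefore
\begin{equation*}
  \bigl(\fock{V_1} + \ldots + \fock{V_r}\bigr) \cap \symfock{\C^d} = \symfock{V_1} + \ldots + \symfock{V_r};
\end{equation*}
the inclusion $\supset$ is obvious, while $\subset$ follows by applying $P_{\symfock{\C^d}}$ to an element of the left-hand side, which fixes it.

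Finally, assuming the hypothesis that $\fock{V_1} + \ldots + \fock{V_r}$ is closed in $\fock{\C^d}$, the right-hand side above is exhibited as the intersection of two closed subspaces of $\fock{\C^d}$, hence is closed. Pulling back through $U$ concludes that $\F_{\widehat J_1} + \ldots + \F_{\widehat J_r}$ is closed, i.e.\ $J$ is \admideal. The only substantive point is the commutation of the symmetrizer with $P_{\fock{V_k}}$, which is immediate from the tensor-product structure; no genuine obstacle arises here, since the real work is deferred to proving closedness on full Fock space in the later sections.
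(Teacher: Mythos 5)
Your proof is correct and follows essentially the same route as the paper: identify each $U(\F_{\widehat J_k})$ with $\symfock{V_k}$ via Lemma~\ref{lem:F_I_generating_set} and \eqref{eqn:DA_Fock}, and then use the commutation $Q\,P_{\fock{V_k}} = P_{\symfock{V_k}}$ to transfer closedness from the full to the symmetric Fock sum. The only cosmetic difference is that you phrase the transfer as the set identity $(\fock{V_1}+\ldots+\fock{V_r})\cap\symfock{\C^d}=\symfock{V_1}+\ldots+\symfock{V_r}$, so that closedness falls out as an intersection of two closed subspaces, whereas the paper reaches the same conclusion by taking an element of the closure, writing it as a sum in the closed full Fock sum, and applying $Q$; the key step — that $Q$ maps a full Fock decomposition termwise into the symmetric Fock spaces — is identical in both.
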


\begin{proof}
  Let $\widehat J_k$ be the vanishing ideal of $V_k$.
  Then by Lemma \ref{lem:F_I_generating_set}, the linear span
  of the elements $\langle \cdot,\lambda \rangle^n$ with $\lambda \in V_k$
  and $n \in \N$ is dense in $\F_{\widehat J_k}$, whereas
  $\symfock{V_k}$ is the closed linear span of the symmetric tensors $\lambda^{\otimes n}$ with
  $\lambda \in V_k$ and $n \in \N$. Hence, the identity \eqref{eqn:DA_Fock} shows that
  $U$ maps $\F_{\widehat J_k}$ onto $\symfock{V_k}$, so that $J$ is \admideal\
  if and only if the algebraic sum
  \begin{equation*}
    S = \symfock{V_1} + \ldots + \symfock{V_r}
  \end{equation*}
  is closed.

  Now, let $Q$
  be the orthogonal
  projection from $\fock{\C^d}$ onto $\symfock{\C^d}$. It is well known that in degree $n$, we have
  \begin{equation*}
   Q \big|_{(\C^d)^{\otimes n}} = \frac{1}{n!} \sum_{\sigma \in S_n} U_\sigma,
  \end{equation*}
  where $S_n$ denotes the symmetric group on $n$ letters, and for $\sigma \in S_n$, the unitary
  operator $U_\sigma$ is given by
  \begin{equation*}
    U_\sigma (x_1 \otimes \ldots \otimes x_n) = x_{\sigma^{-1} (1)} \otimes \ldots \otimes x_{\sigma^{-1}(n)}.
  \end{equation*}
  Note that for a subspace $V \subset \C^d$, the projections $Q$ and $P_{\fock{V}}$ commute and
  $Q P_{\fock{V}} = P_{\symfock{V}}$,
  from which it easily follows that closedness of $\fock{V_1} + \ldots + \fock{V_r}$
  implies closedness of $S$. Indeed, if
  $x$ is in the closure of
  $S$, then
  we can write $x = \widetilde x_1 + \ldots + \widetilde x_r$ with $ \widetilde x_k \in \fock{V_k}$.
  Setting $x_k = Q \widetilde x_k \in \symfock{V_k}$, we have
  \begin{equation*}
    x = Q x =  x_1 + \ldots + x_r \in S. \qedhere
  \end{equation*}
\end{proof}

\section{The Friedrichs angle}
In order to show that sums of full Fock spaces are closed, we will make use of a classical
notion of angle between two closed subspaces of a Hilbert space due to Friedrichs \cite{friedrichs} (for the
history of this and related quantities, see for example \cite{boettcher}).
\begin{defn}
  Let $H$ be a Hilbert space and let $M,N \subset H$ be closed subspaces.
  If $M \not \subset N$ and $N \not \subset M$,
  the Friedrichs angle between $M$ and $N$ is defined to be the angle in $[0,\frac{\pi}{2}]$
  whose cosine is
  \begin{equation*}
  \fangle(M,N) = \sup_{\substack{x \in M \ominus (M \cap N)
  \\ y \in N \ominus (M \cap N) \\ x \neq 0 \neq y}} \frac{ | \langle x,y \rangle|}
  {||x|| \, ||y||}.
  \end{equation*}
  Otherwise, we set $\fangle(M,N) = 0$.
\end{defn}

We record some standard properties of the Friedrichs angle in the following lemma. For a closed subspace
$M$ of a Hilbert space $H$, we denote the orthogonal projection from $H$ onto $M$ by $P_M$.
\begin{lem}
  \label{lem:angle_standard_prop}
  Let $H$ be a Hilbert space and let $M$ and $N$ be closed subspaces of $H$.
  \begin{enumerate}[label=\normalfont{(\alph*)},ref={\thelem~(\alph*)}]
  \item $\fangle(M,N) = \fangle(M \ominus (M \cap N), N \ominus (M \cap N))$.
  \label{it:angle_standard_prop_disjoint}
  \item $\fangle(M,N) = ||P_M P_N - P_{M \cap N}||$ and $\fangle(M,N)^2 = ||P_N P_M P_N - P_{M \cap N}||$.
  \label{it:angle_standard_prop_square}
  \item $M+N$ is closed if and only if $c(M,N)< 1$.
  \label{it:angle_standard_prop_sum_closed}
  \end{enumerate}
\end{lem}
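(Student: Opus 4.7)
The plan is to reduce all three statements to the case $M \cap N = \{0\}$ by passing from $M, N$ to their ``disjoint parts'' $M' := M \ominus L$ and $N' := N \ominus L$, where $L := M \cap N$. The central tool is the orthogonal decomposition $P_M = P_L + P_{M'}$ and $P_N = P_L + P_{N'}$, together with the cross-term vanishings $P_L P_{M'} = P_L P_{N'} = 0$ (and their adjoints), which hold because $L \perp M'$ and $L \perp N'$.

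Part (a) is immediate once I observe that $M' \cap N' = \{0\}$ and, provided both $M'$ and $N'$ are nonzero, neither is contained in the other: an inclusion $M' \subset N'$ would force $M' \subset L$, contradicting $M' \perp L$. The degenerate cases where $M'$ or $N'$ vanishes give $0$ on both sides by convention. For part (b), I would expand $P_M P_N - P_{M \cap N}$ and $P_N P_M P_N - P_{M \cap N}$ using the cross-term vanishings above, obtaining $P_M P_N - P_L = P_{M'} P_{N'}$ and $P_N P_M P_N - P_L = P_{N'} P_{M'} P_{N'}$. The first identity then reduces to the standard computation $\|P_{M'} P_{N'}\| = \sup\{|\langle x, y\rangle| : x \in M',\, y \in N',\, \|x\|=\|y\|=1\}$, which one verifies by restricting $\|P_{M'} P_{N'} z\|$ to unit vectors $z \in N'$ and pairing against unit vectors in $M'$; this supremum is precisely $\fangle(M',N') = \fangle(M,N)$ by (a). The second identity follows from the $C^*$-identity applied to $T := P_{M'} P_{N'}$, since $P_{N'} P_{M'} P_{N'} = T^* T$.

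For part (c), I would again reduce via (a) to the case $M \cap N = \{0\}$ and consider the bounded injective linear map $T: M \oplus N \to H$, $(x,y) \mapsto x + y$; closedness of $M + N$ is equivalent, via the open mapping theorem, to $T$ being bounded below. Using $\|x+y\|^2 = \|x\|^2 + 2 \operatorname{Re} \langle x, y \rangle + \|y\|^2$ together with the bound $|\langle x, y\rangle| \le \fangle(M,N) \|x\| \|y\| \le \tfrac{1}{2}\fangle(M,N)(\|x\|^2 + \|y\|^2)$, the hypothesis $\fangle(M,N) < 1$ yields a lower bound of the form $\|x+y\|^2 \ge (1 - \fangle(M,N))(\|x\|^2 + \|y\|^2)$. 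Conversely, if $\fangle(M,N) = 1$, near-extremal sequences produce unit vectors $x_n \in M$ and $y_n \in N$ with $\langle x_n, y_n \rangle \to -1$, so that $\|x_n + y_n\| \to 0$, contradicting the putative lower bound on $T$.

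None of the three parts presents any genuine difficulty; everything comes down to careful bookkeeping with orthogonal decompositions and standard operator-norm identities. The only step requiring a moment of thought is the computation of $\|P_{M'} P_{N'}\|$ in (b), where one has to realize the Friedrichs supremum as the operator norm of a product of two projections.
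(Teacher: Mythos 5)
Your argument is correct, and for the second half of (b) it is essentially the paper's argument (the $C^*$-identity applied to $T = P_M P_N - P_{M\cap N}$, which by your decomposition equals $P_{M'}P_{N'}$). The main difference is in scope: the paper treats (a) as obvious and simply cites Deutsch (Lemma~10 and Theorem~13 of the reference \cite{deutsch95}) for the first identity in (b) and for (c), proving only the $C^*$-identity step from scratch. You instead give a self-contained proof of all three parts. Your route through the orthogonal decomposition $P_M = P_L + P_{M'}$, $P_N = P_L + P_{N'}$ with $L = M\cap N$ is the standard way to prove the cited facts: the identity $P_M P_N - P_L = P_{M'}P_{N'}$ reduces the first half of (b) to the elementary observation that the norm of a product of two projections equals the supremum of $|\langle x,y\rangle|$ over unit vectors in their ranges; and for (c) the open-mapping-theorem characterization of closed range via the injective map $(x,y)\mapsto x+y$ on $M'\oplus N'$, together with the two-sided estimate from the Friedrichs angle, is exactly the classical argument. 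One small presentational point: as stated, the map $T:M\oplus N\to H$ is injective only after the reduction to $M\cap N=\{0\}$, which you do perform but should make explicit before invoking injectivity. Overall this is a correct unpacking of what the paper delegates to the literature, at the cost of length; the paper's choice to cite is defensible precisely because these facts are standard.
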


\begin{proof}
  (a) is obvious, and the first half of (b) and (c) are well known, see for example
  Lemma 10 and Theorem 13 in \cite{deutsch95}. To show the second half of (b), we set $T=P_M P_N - P_{M \cap N}$
  and note that
  \begin{equation*}
    T^* T = (P_N P_M - P_{M \cap N}) (P_M P_N - P_{M \cap N}) = P_N P_M P_N - P_{M \cap N}.
  \end{equation*}
  Hence, by the first half of (b),
  \begin{equation*}
    c(M,N)^2 = ||T||^2 = ||T^* T|| = ||P_N P_M P_N - P_{M \cap N}||.
    \qedhere
  \end{equation*}
\end{proof}

Part (c) is the reason why we are considering the Friedrichs angle. Recently,
Badea, Grivaux and M\"uller \cite{BGV} have introduced a generalization of the Friedrichs angle
to more than two subspaces.
Although we want to show closedness of sums of arbitrarily many Fock spaces,
an inductive argument using the classical definition for two subspaces
seems to be more feasible in our case.

As a first application, we exhibit two radical homogeneous ideals $I,J \subset \polyrin$
such that $\F_I + \F_J$ is not closed.
When the ideals are not necessarily radical, an example for this phenomenon is also given by Shalit's example
of a set of polynomials which
is not a stable generating set, see \cite[Example 2.6]{shalit}.

\begin{exa}
  \label{exa:sum_not_closed}
  Let $I = \langle y^2 + x z \rangle$ and $J=\langle x \rangle$ in $\C[x,y,z]$.
  We claim that $\F_I + \F_J$ is not closed.
  Since for two closed subspaces $M$ and $N$ of a Hilbert space $H$, closedness of $M+N$ is equivalent
  to closedness of $M^\bot + N^\bot$ (see for example \cite[Theorem 13]{deutsch95}), it suffices to show that
  $\overline{I} + \overline{J}$ is not closed.
  To this end, we set for $n \ge 2$
  \begin{equation*}
    f_n = z^{n-2} (y^2+xz) \quad \tand \quad g_n = z^{n-1} x.
  \end{equation*}
  Clearly, $f_n \in I$ and $g_n \in J$ for all $n$. Using that different monomials in $H^2_d$ are orthogonal,
  one easily checks that all $f_n$ and $g_n$ are orthogonal to $I \cap J = \langle x^2 z + x y^2 \rangle$,
  so they are orthogonal to $\overline{I} \cap \overline{J} = \overline{I \cap J}$ (see Lemma \ref{lem:ideal_dec_F})
  as well.
  Moreover, a straightforward calculations yields
  \begin{equation*}
    ||f_n||^2 = \frac{n+1}{n (n-1)} \quad \tand \quad \langle f_n,g_n \rangle = ||g_n||^2 = \frac{1}{n}.
  \end{equation*}
  Consequently,
  \begin{equation*}
    \frac{\langle f_n,g_n \rangle}{||f_n|| \, ||g_n||} = \sqrt{\frac{n-1}{n+1}} \xrightarrow{n \to \infty} 1,
  \end{equation*}
  from which we conclude that $\fangle(\overline{I},
  \overline{J})=1$, so that $\overline{I} + \overline{J}$
  is not closed by Lemma \ref{it:angle_standard_prop_sum_closed}.
\end{exa}

Let $H$ be a Hilbert space which is graded in the sense that $H$ is the orthogonal
direct sum
\begin{equation*}
  H = \bigoplus_{n \in \N} H_n
\end{equation*}
for some Hilbert spaces $H_n$.
Denote the orthogonal projection from $H$ to $H_n$ by $P_n$. We say that
a closed subspace $M \subset H$ is graded if $P_n P_M = P_M P_n$ for all $n \in \N$.
Equivalently,
\begin{equation*}
  M = \bigoplus_{n=0}^\infty M \cap H_n.
\end{equation*}
Note that $M$ is graded if and only if $P_M$ belongs to the commutant of $\{P_n: n \in \N\}$,
which is a von Neumann algebra. In particular, if $M,N \subset H$ are graded,
then $\overline{M+N}$ and $M \cap N$
are graded as well.
The most important examples of graded Hilbert spaces in our case are full Fock spaces and sums thereof.

The angle between two graded subspaces can be easily expressed in terms of the angles
between their graded components by the following formula.
\begin{lem}
  \label{lem:angle_graded_hilb_space}
  Let $H=\bigoplus_{n=0}^\infty H_n$ be a graded Hilbert space and let $M,N \subset H$ be graded subspaces.
  Write $M_n = M \cap H_n$ and $N_n = N \cap H_n$ for $n \in \N$.
  Then
  \begin{equation*}
    \fangle(M,N) = \sup_{n \in \N} \fangle(M_n,N_n).
  \end{equation*}
\end{lem}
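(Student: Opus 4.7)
The plan is to exploit the operator-theoretic characterization of the Friedrichs angle in Lemma \ref{it:angle_standard_prop_square}, namely $\fangle(M,N) = \|P_M P_N - P_{M \cap N}\|$, and then use the fact that a graded projection decomposes as an orthogonal direct sum of its degree-$n$ pieces, so that the relevant operator is block diagonal with respect to the grading.

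First, I would verify that $M \cap N$ is graded and satisfies $M \cap N = \bigoplus_n (M_n \cap N_n)$; this follows from the remark in the text that the commutant of $\{P_k : k \in \N\}$ is a von Neumann algebra containing $P_M$ and $P_N$, hence also $P_M \wedge P_N = P_{M \cap N}$. Consequently the three projections $P_M$, $P_N$, and $P_{M \cap N}$ are block diagonal:
\begin{equation*}
  P_M = \bigoplus_{n \in \N} P_{M_n}, \qquad P_N = \bigoplus_{n \in \N} P_{N_n}, \qquad P_{M \cap N} = \bigoplus_{n \in \N} P_{M_n \cap N_n}.
\end{equation*}

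Second, I would combine these expansions to get
\begin{equation*}
  P_M P_N - P_{M \cap N} = \bigoplus_{n \in \N} \bigl( P_{M_n} P_{N_n} - P_{M_n \cap N_n} \bigr).
\end{equation*}
The norm of a block-diagonal operator equals the supremum of the norms of its blocks, so applying Lemma \ref{it:angle_standard_prop_square} to $M,N$ and to each pair $M_n,N_n$ yields
\begin{equation*}
  \fangle(M,N) = \|P_M P_N - P_{M \cap N}\| = \sup_{n \in \N} \|P_{M_n} P_{N_n} - P_{M_n \cap N_n}\| = \sup_{n \in \N} \fangle(M_n,N_n).
\end{equation*}
One minor point to check is that the formula $\fangle(M,N) = \|P_M P_N - P_{M \cap N}\|$ correctly returns $0$ in the degenerate cases $M \subset N$ or $N \subset M$ excluded from the defining supremum; this is immediate since in those cases $P_M P_N$ coincides with $P_{M \cap N}$. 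There is no real obstacle here: once the characterization from the previous lemma is in hand, the statement reduces to the elementary fact about norms of direct sums of operators.
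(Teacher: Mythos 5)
Your argument is correct and follows essentially the same route as the paper: apply the operator-norm characterization $\fangle(M,N) = \|P_M P_N - P_{M\cap N}\|$ from Lemma \ref{it:angle_standard_prop_square} together with the block-diagonal decomposition of projections onto graded subspaces. The paper's proof is simply a terser version of exactly this observation.
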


\begin{proof}
  The assertion readily follows from Lemma
  \ref{it:angle_standard_prop_square} and the fact that
  for any graded subspace $K \subset H$, we have
  \begin{equation*}
    P_{K} = \bigoplus_{n=0}^\infty P_{K \cap H_n}^{H_n},
  \end{equation*}
  where $P_{K \cap H_n}^{H_n}$ denotes the orthogonal projection from $H_n$ onto $K \cap H_n$.
\end{proof}

If each of the spaces $H_n$ in the preceding lemma is finite dimensional, then
$\fangle(M_n,N_n) < 1$ for all $n \in \N$. This can easily be seen
from the definition of the Friedrichs angle, or, alternatively, it follows as an application
of Lemma \ref{it:angle_standard_prop_sum_closed}.
In particular, $M+N$ is closed
if and only if $\limsup_{n \to \infty} \fangle(M_n,N_n) < 1$.
That is, closedness of $M+N$ only depends on the asymptotic behaviour of the sequence
$(\fangle(M_n,N_n))_n$.
Inspired by condition 7 in \cite[Theorem 2.3]{BGV},
we will now introduce a variant of the Friedrichs angle which
reflects this fact.
For a closed subspace $M$ of a Hilbert space $H$, we denote the equivalence class of $P_M$ in
the Calkin algebra by $p_M$.

\begin{defn}
  Let $H$ be a Hilbert space and let $M,N \subset H$ be closed subspaces.
  The essential
  Friedrichs angle is defined to be the angle in $[0,\frac{\pi}{2}]$ whose cosine is
  \begin{equation*}
    \fangle_e(M,N) = ||p_M p_N - p_{M \cap N}||.
  \end{equation*}
\end{defn}

Parts (a) and (b) of Lemma \ref{lem:angle_standard_prop} also hold with $\fangle_e$ in place
of $\fangle$.
\begin{lem}
  \label{lem:ess_angle_standard_prop}
  Let $H$ be a Hilbert space and let $M,N \subset H$ be closed subspaces.
  \begin{enumerate}[label=\normalfont{(\alph*)},ref={\thelem~(\alph*)}]
    \item $\fangle_e(M,N) = \fangle_e(M \ominus (M \cap N), N \ominus (M \cap N))$.
    \label{it:ess_angle_standard_prop_disjoint}
    \item $\fangle_e(M,N)^2 = ||p_N p_M p_N - p_{M \cap N}||$.
    \label{it:ess_angle_standard_prop_square}
  \end{enumerate}
\end{lem}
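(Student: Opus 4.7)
The plan is to mirror the proof of Lemma \ref{lem:angle_standard_prop} (a) and (b) inside the Calkin algebra, exploiting the fact that the quotient map $\clop(H) \to \clop(H)/\K(H)$ is a $*$-homomorphism, so that each $p_M$ is a selfadjoint projection and every relation between the $P_M$'s passes to the $p_M$'s. The Calkin algebra is a $C^*$-algebra, so the $C^*$-identity $\|T\|^2 = \|T^*T\|$ is available.

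For part (a), I would set $M' = M \ominus (M \cap N)$ and $N' = N \ominus (M \cap N)$, and compute at the operator level. Since $M'$ is orthogonal to $M \cap N$, we have $P_{M'} P_{M \cap N} = 0$; similarly for $N'$. Decomposing
\begin{equation*}
  P_M = P_{M'} + P_{M \cap N}, \qquad P_N = P_{N'} + P_{M \cap N},
\end{equation*}
and multiplying out, the mixed terms vanish and $P_{M \cap N}^2 = P_{M \cap N}$, giving
\begin{equation*}
  P_M P_N - P_{M \cap N} = P_{M'} P_{N'}.
\end{equation*}
Now $M' \cap N' = \{0\}$, so $p_{M' \cap N'} = 0$. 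Passing to the Calkin algebra yields $p_M p_N - p_{M \cap N} = p_{M'} p_{N'} - p_{M' \cap N'}$, and taking norms gives (a).

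For part (b), set $T = p_M p_N - p_{M \cap N}$. Because all three projections are selfadjoint in the Calkin algebra, $T^* = p_N p_M - p_{M \cap N}$. The key operator identities $P_M P_{M \cap N} = P_{M \cap N} = P_N P_{M \cap N}$ (valid since $M \cap N \subset M \cap N$) descend to $p_M p_{M \cap N} = p_{M \cap N} = p_N p_{M \cap N}$. Expanding
\begin{equation*}
  T^* T = p_N p_M p_N - p_N p_M p_{M \cap N} - p_{M \cap N} p_M p_N + p_{M \cap N}^2
\end{equation*}
and using these identities together with $p_{M \cap N}^2 = p_{M \cap N}$, every term except the first collapses and one obtains $T^*T = p_N p_M p_N - p_{M \cap N}$. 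The $C^*$-identity then gives
\begin{equation*}
  \fangle_e(M,N)^2 = \|T\|^2 = \|T^*T\| = \|p_N p_M p_N - p_{M \cap N}\|.
\end{equation*}

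There is no real obstacle here: the whole argument is a direct transcription of the classical computation into the Calkin algebra, and the only points requiring any attention are (i) verifying that the quotient map preserves the selfadjointness and the orthogonality relations used above, and (ii) invoking the $C^*$-identity in the Calkin algebra for part (b).
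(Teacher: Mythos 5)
Your proof is correct and follows essentially the same route as the paper: for (a) you arrive at the operator identity $P_M P_N - P_{M \cap N} = P_{M \ominus (M\cap N)} P_{N \ominus (M \cap N)}$ (which, together with the observation that the intersection of the reduced spaces is trivial, passes directly to the Calkin algebra), and for (b) you compute $T^*T = p_N p_M p_N - p_{M\cap N}$ and invoke the $C^*$-identity, which is precisely what the paper does by reference to the proof of Lemma~\ref{lem:angle_standard_prop}.
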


\begin{proof}
  (a) follows from the identity
\begin{equation*}
  (P_M - P_{M \cap N}) ( P_N - P_{M \cap N}) = P_M P_N - P_{M \cap N},
\end{equation*}
while (b) is again an application of the $C^*$-identity, see the proof of Lemma \ref{lem:angle_standard_prop}.
\end{proof}

To determine if $M+N$ is closed, the essential Friedrichs angle
is just as good as the usual one, that is, part (c) of Lemma \ref{lem:angle_standard_prop}
holds with $\fangle_e$ in place of $\fangle$ as well. This follows from \cite[Theorem 2.3]{BGV}.
For the convenience of the reader, a short proof is provided below.
First, we record a simple lemma.

\begin{lem}
  \label{lem:proj_intersection_point_spectrum}
  Let $H$ be a Hilbert space and let $M_1 ,\ldots , M_r \subset H$
  be closed subspaces. Define $T = P_{M_1} P_{M_2} \ldots P_{M_r}$ and $M = M_1 \cap \ldots \cap M_r$.
  \begin{enumerate}[label=\normalfont{(\alph*)},ref={\thelem~(\alph*)}]
  \item $\ker(1-T^* T) = M$.
  \label{it:proj_intersection_point_spectrum_kernel}
  \item If $\dim H < \infty$, then $||T|| = 1$ if and only if $M \neq \{0\}$.
  \label{it:proj_intersection_point_spectrum_fin_dim}
  \end{enumerate}
\end{lem}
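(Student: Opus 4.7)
The plan is to handle (a) first, since (b) falls out immediately as an application.

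For (a), the inclusion $M \subset \ker(1 - T^*T)$ is the easy direction: any $x \in M_1 \cap \cdots \cap M_r$ is fixed by each projection $P_{M_k}$, so both $T$ and $T^* = P_{M_r} \cdots P_{M_1}$ fix $x$, whence $T^*Tx = x$. For the reverse inclusion, I would start from the identity $\|Tx\|^2 = \langle T^*Tx,x\rangle = \|x\|^2$, which is the content of $T^*Tx = x$, and then track the norm through the product $T = P_{M_1} P_{M_2} \cdots P_{M_r}$.

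Concretely, set $x_{r+1} = x$ and $x_k = P_{M_k} x_{k+1}$ for $k = r, r-1, \ldots, 1$, so that $Tx = x_1$. Since each $P_{M_k}$ is a contraction, one has the monotone chain
\begin{equation*}
  \|x_1\| \le \|x_2\| \le \cdots \le \|x_{r+1}\| = \|x\|,
\end{equation*}
and $\|x_1\| = \|x\|$ forces equality at every step. The case of equality in $\|P_{M_k} x_{k+1}\| = \|x_{k+1}\|$ characterizes membership $x_{k+1} \in M_k$, so one extracts $x_{k+1} \in M_k$ for each $k = 1, \ldots, r$. In particular $x = x_{r+1} \in M_r$. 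A downward induction then upgrades this: once $x \in M_{k+1} \cap \cdots \cap M_r$, the corresponding $x_{k+1} = P_{M_{k+1}} \cdots P_{M_r} x$ equals $x$, and the conclusion $x_{k+1} \in M_k$ gives $x \in M_k$. Iterating yields $x \in M_1 \cap \cdots \cap M_r = M$.

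For part (b), the \emph{if} direction is trivial: any nonzero $x \in M$ satisfies $Tx = x$, so $\|T\| \ge 1$, while $\|T\| \le 1$ always holds. For the \emph{only if} direction, in finite dimensions $T^*T$ is a positive contraction and $\|T^*T\| = \|T\|^2 = 1$, so by the spectral theorem $1$ is an eigenvalue of $T^*T$. Any associated eigenvector lies in $\ker(1 - T^*T) = M$ by part (a), proving $M \neq \{0\}$. The only nontrivial step in the entire argument is the norm-tracking chain in (a); everything else is a routine manipulation of projections.
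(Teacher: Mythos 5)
Your proof is correct and follows essentially the same approach as the paper: both reduce to showing that $\|Tx\|=\|x\|$ forces $x\in M$ by tracking equality through the chain of contractions, you do it iteratively via the intermediate vectors $x_k$ while the paper does it by induction on $r$, and both conclude (b) from the fact that a finite-dimensional positive contraction attains its norm.
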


\begin{proof}
  We first claim that a vector $x \in H$ satisfies $||T x|| = ||x||$ if and only if $x \in M$.
  We prove the non-trivial implication
  by induction on $r$. The case $r=1$ is clear.
  So suppose that $r \ge 2$ and that the assertion is true for $r-1$ subspaces.
  Let $x \in H$ such that $||T x|| = ||x||$. Setting $y = P_{M_2} \ldots P_{M_r} x$, we have
  \begin{equation*}
    ||x|| = ||P_{M_1} y|| \le ||y|| \le ||x||,
  \end{equation*}
  hence $y \in M_1$ and $||P_{M_2} \ldots P_{M_r} x|| = ||x||$.
  The inductive hypothesis implies that
  $x \in M_2 \cap \ldots \cap M_r$, and thus also $x = y \in M_1$, which finishes the proof of the claim.

  Both assertions easily follow from this observation. Clearly, $M$ is contained in $\ker(1-T^* T)$. Conversely,
  any $x \in \ker(1-T^* T)$ satisfies $||x||^2 = ||T x||^2$, so that $x \in M$ by the above remark, which proves (a).

  Part (b) is immediate from the claim as well, since $||T||$ is attained if $H$ is finite dimensional.
\end{proof}

\begin{lem}
  \label{lem:sum_closed_ess_angle}
  Let $H$ be a Hilbert space and let $M,N \subset H$ be closed subspaces.
  Then $M+N$ is closed if and only if $\fangle_e(M,N) < 1$.
\end{lem}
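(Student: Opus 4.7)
The plan is to relate the essential Friedrichs angle to the classical one and then invoke Lemma \ref{it:angle_standard_prop_sum_closed}. Since the Calkin quotient map is contractive, $\fangle_e(M,N) \le \fangle(M,N)$, so if $M+N$ is closed then $\fangle(M,N) < 1$ by Lemma \ref{it:angle_standard_prop_sum_closed}, and hence $\fangle_e(M,N) < 1$. The content is in the converse.

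Assume $\fangle_e(M,N) < 1$. My first step is to reduce to the case $M \cap N = \{0\}$. By Lemma \ref{it:angle_standard_prop_disjoint} and Lemma \ref{it:ess_angle_standard_prop_disjoint}, both angles are unchanged when $M,N$ are replaced by $M \ominus (M \cap N)$ and $N \ominus (M \cap N)$. Closedness of the algebraic sum is also preserved, since $M \cap N$ is orthogonal to both subtracted spaces and so $M+N$ decomposes as the orthogonal sum of $M \cap N$ and $(M \ominus (M \cap N)) + (N \ominus (M \cap N))$. Thus I may assume $M \cap N = \{0\}$ and work with $T := P_M P_N$, which satisfies $\|T\| = \fangle(M,N)$ and $\|T\|_e = \fangle_e(M,N)$. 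The reduced goal is to prove $\|T\| < 1$ given $\|T\|_e < 1$.

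The heart of the argument is spectral. The operator $T^*T = P_N P_M P_N$ is positive self-adjoint, and the $C^*$-identity in the Calkin algebra gives $\|T^*T\|_e = \|T\|_e^2 < 1$. Because the image of $T^*T$ is self-adjoint in the Calkin algebra, its norm there equals its spectral radius, so $\sigma_{\mathrm{ess}}(T^*T) \subset [0, \|T\|_e^2]$. By Weyl's theorem, any point of $\sigma(T^*T)$ strictly exceeding $\|T\|_e^2$ is an isolated eigenvalue of finite multiplicity. Since $\|T\| \le 1$ automatically, the only way $\|T\| = 1$ could occur is if $1 \in \sigma(T^*T) \setminus \sigma_{\mathrm{ess}}(T^*T)$, producing some $x \neq 0$ with $T^*T x = x$. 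But Lemma \ref{it:proj_intersection_point_spectrum_kernel}, applied to the product $P_M P_N$, identifies $\ker(1 - T^*T)$ with $M \cap N = \{0\}$, a contradiction. Hence $\|T\| < 1$, and Lemma \ref{it:angle_standard_prop_sum_closed} yields closedness of $M + N$.

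The most delicate step is the spectral claim that $\sigma(T^*T) \cap (\|T^*T\|_e, \infty)$ consists of isolated eigenvalues of finite multiplicity. This is standard Weyl theory, but it rests on identifying the essential norm of a self-adjoint operator with the spectral radius of its image in the Calkin algebra via the $C^*$-property. Everything else---the reduction to $M \cap N = \{0\}$, the use of Lemma \ref{it:proj_intersection_point_spectrum_kernel} to force the would-be eigenvector into $M \cap N$, and the final appeal to Lemma \ref{it:angle_standard_prop_sum_closed}---is essentially bookkeeping.
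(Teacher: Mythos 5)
Your proof is correct and takes essentially the same route as the paper: reduce to $M \cap N = \{0\}$ via Lemmas \ref{it:angle_standard_prop_disjoint} and \ref{it:ess_angle_standard_prop_disjoint}, use the Calkin-algebra bound to control $P_N P_M P_N$ near $1$, and invoke Lemma \ref{it:proj_intersection_point_spectrum_kernel} to kill the kernel. The only difference is cosmetic packaging: the paper argues that $1 - P_N P_M P_N$ is a self-adjoint, injective Fredholm operator (hence invertible by index zero), while you reach the same point through Weyl's theorem on isolated eigenvalues above the essential spectrum; these are two standard phrasings of the same fact.
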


\begin{proof}
  In view of Lemma \ref{it:angle_standard_prop_sum_closed}, it is sufficient to show that $\fangle(M,N) < 1$ if
  $\fangle_e(M,N) < 1$, since $\fangle_e(M,N) \le \fangle(M,N)$ holds trivially.
  To this end, we can assume without loss of generality that $M \cap N = \{0\}$ by
  Lemma \ref{it:angle_standard_prop_disjoint} and Lemma \ref{it:ess_angle_standard_prop_disjoint}.
  Then $||P_N P_M P_N||_e < 1$, so $T=1-P_N P_M P_N$ is a self-adjoint Fredholm operator.
  Lemma \ref{it:proj_intersection_point_spectrum_kernel} implies that $T$ is injective, from
  which we conclude that $T$ is invertible. It follows that $1 \not \in \sigma(P_N P_M P_N)$,
  and hence that $\fangle(M,N) = ||P_N P_M P_N|| < 1$.
\end{proof}

For graded subspaces, we obtain a more concrete description of the essential Friedrichs angle, which
gives another proof for the preceding lemma in the graded case. In particular, we see that the
essential Friedrichs angle indeed only depends on the asymptotic behaviour of the Friedrichs
angles between the graded components.

\begin{lem}
  \label{lem:ess_angle_limsup}
  Let $H = \bigoplus_{n=0}^\infty H_n$ be a graded Hilbert space, where all $H_n$
  are finite dimensional, and let $M,N \subset H$ be graded subspaces. Write $M_n = M \cap H_n$ and $N_n = N \cap H_n$
  for $n \in \N$. Then
  \begin{equation*}
    \fangle_e(M,N) = \limsup_{n \to \infty} \fangle(M_n, N_n).
  \end{equation*}
\end{lem}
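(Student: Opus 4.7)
The plan is to compute both sides as norms of the operator $T = P_M P_N - P_{M \cap N}$, exploiting the grading to express $T$ as a direct sum of operators on finite-dimensional blocks, and then to identify the essential norm with the limsup of the block norms.

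First, I would observe that since $M$ and $N$ are graded, so is $M \cap N$, with $(M \cap N) \cap H_n = M_n \cap N_n$. Using the block diagonal decomposition of projections onto graded subspaces already noted in the proof of Lemma \ref{lem:angle_graded_hilb_space}, this gives
\begin{equation*}
  T = P_M P_N - P_{M \cap N} = \bigoplus_{n=0}^\infty T_n,
\end{equation*}
where $T_n = P_{M_n}^{H_n} P_{N_n}^{H_n} - P_{M_n \cap N_n}^{H_n}$ acts on $H_n$. By the first half of Lemma \ref{lem:angle_standard_prop}(b), $\|T_n\| = \fangle(M_n, N_n)$, and by definition $\fangle_e(M,N) = \|T\|_e$, where $\|\cdot\|_e$ denotes the essential norm on $\clop(H)$.

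Thus the theorem reduces to the general fact that for any block diagonal operator $T = \bigoplus_n T_n$ on a Hilbert space $H = \bigoplus_n H_n$ with each $H_n$ finite dimensional, one has $\|T\|_e = \limsup_n \|T_n\|$. For the inequality $\|T\|_e \le \limsup_n \|T_n\|$, I would, for each $N \in \N$, consider the truncation $T^{(N)} = \bigoplus_{n < N} T_n \oplus 0$, which is a finite-rank operator since each $H_n$ is finite dimensional. Then $\|T - T^{(N)}\| = \sup_{n \ge N} \|T_n\|$, and letting $N \to \infty$ yields the bound. For the reverse inequality, I would let $P_n$ denote the projection onto $H_n$ and use that $P_n \to 0$ strongly, so $\|P_n K\| \to 0$ for every compact $K$ (and likewise $\|K P_n\| \to 0$). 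For any compact $K$,
\begin{equation*}
  \|T_n\| = \|P_n T P_n\| \le \|P_n (T - K) P_n\| + \|P_n K P_n\| \le \|T - K\| + \|P_n K\|,
\end{equation*}
and taking $\limsup_n$ gives $\limsup_n \|T_n\| \le \|T - K\|$; infimizing over compact $K$ gives $\limsup_n \|T_n\| \le \|T\|_e$.

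The argument is essentially routine once the block diagonal form of $T$ is in place; the only mildly delicate point is the lower bound on the essential norm, which requires the standard fact that compact operators are approximated by their compressions $P_n K P_n$ in the appropriate asymptotic sense. No significant obstacle is expected, as the finite-dimensionality of each $H_n$ means all the block operators are compact individually, so the contribution of each finite initial segment is negligible essentially.
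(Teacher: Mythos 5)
Your proof is correct and follows essentially the same route as the paper: both sides are identified with norms of $T = P_M P_N - P_{M\cap N}$ and its blocks, the lower bound for $\fangle_e$ comes from the finite-rank truncations $\bigoplus_{n<N} T_n$, and the upper bound for $\limsup_n \|T_n\|$ comes from compressing $T-K$ by $P_n$ and using $\|P_n K P_n\|\to 0$. The paper carries out the same two estimates directly without first abstracting the block-diagonal fact, but the substance is identical.
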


\begin{proof}
  Let $\varepsilon > 0 $ be arbitrary. By definition of $\fangle_e$, there is a compact
  operator $K$ on $H$ such that
  \begin{equation*}
    ||P_M P_N - P_{M \cap N} + K|| \le \fangle_e(M,N) + \varepsilon.
  \end{equation*}
  It is easy to see that $\lim_{n \to \infty} ||P_n K P_n|| = 0$.
  Furthermore,
  \begin{align*}
    \fangle(M_n, N_n) &=
    ||P_n (P_M P_N - P_{M \cap N}) P_n|| \\
    &\le ||P_n (P_M P_N - P_{M \cap N} + K) P_n|| + ||P_n K P_n|| \\
    &\le \fangle_e(M,N) + \varepsilon + ||P_n K P_n||,
  \end{align*}
  so $\limsup_{n \to \infty} \fangle(M_n,N_n) \le \fangle_e(M,N)$.

  Conversely, for any $k \in \N$, the operator
  \begin{equation*}
    K= \bigoplus_{n=0}^k P_n (P_M P_N - P_{M \cap N}) P_n
  \end{equation*}
  has finite rank, and
  \begin{equation*}
    P_M P_N - P_{M \cap N} -K = \bigoplus_{n=k+1}^\infty
    P_n(P_M P_N -P_{M \cap N}) P_n.
  \end{equation*}
  Hence
  \begin{equation*}
    \fangle_e(M,N) \le ||P_M P_N - P_{N \cap N} -K|| = \sup_{n \ge k+1} \fangle(M_n,N_n)
  \end{equation*}
  for all natural numbers $k$,
  which establishes the reverse inequality.
\end{proof}

\begin{rem*}
  If $T$ is an operator on a Hilbert space $H$, the infimum
  \begin{equation*}
    \inf \{ ||T + K||: K \in \K(H) \}
  \end{equation*}
  is always attained \cite{holmes}.
  In particular,
  we can choose an operator $K$ in the first part of the above proof
  such that $||P_M P_N -P_{M \cap N} + K|| = \fangle_e(M,N)$.
\end{rem*}

\section{Reduction to subspaces with trivial joint intersection}

Let $V_1,\ldots,V_r$ be subspaces of $\C^d$.
In this section, we will reduce the problem of showing closedness of the sum of Fock
spaces
$\F(V_1) + \ldots + \F(V_r) \subset \fock{\C^d}$ to the case where $V_1 \cap \ldots \cap V_r = \{0\}$.
Note that in \cite[Lemma 7.12]{davramshal}, Davidson, Ramsey and Shalit reduced the problem of showing
boundedness of the map $f \mapsto f \circ A^*$ in the setting of unions of subspaces
to the case where the joint intersection of the subspaces is trivial.
However, in our situation, it does not suffice to consider only subspaces
with trivial joint intersection.
The issue is that in the inductive proof of closedness of the sum of $r$ Fock spaces,
we will use the inductive hypothesis on $r-1$ subspaces which do not necessarily
have trivial joint intersection.

We begin with two simple consequences of the Gelfand-Naimark theorem.

\begin{lem}
  \label{lem:C_alg_gelfand_consequences}
  Let $\A$ be a unital $C^*$-algebra and let $a,b \in \A$ be self-adjoint elements.
\begin{enumerate}[label=\normalfont{(\alph*)},ref={\thelem~(\alph*)}]
  \item If $a b= 0$, then $||a+b|| = \max(||a||,||b||)$.
  \label{it:C_alg_gelfand_consequences_prod_zero}
  \item Suppose that $a$ and $b$ commute and that $a \le b$.
  If $f$ is a continuous and increasing real-valued function on $\sigma(a) \cup \sigma(b)$, then $f(a) \le f(b)$.
  \label{it:C_alg_gelfand_consequences_increasing}
\end{enumerate}
\end{lem}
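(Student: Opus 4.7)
The plan is to derive both statements from the Gelfand--Naimark theorem applied to the commutative $C^*$-subalgebra generated by $a$ and $b$. In part~(a), self-adjointness of $a,b$ together with $ab=0$ gives $ba=(ab)^*=0$, so $a$ and $b$ commute. In part~(b), commutativity of $a$ and $b$ is already assumed. Therefore, in either case, the unital $C^*$-subalgebra $\mathcal C \subset \A$ generated by $1,a,b$ is commutative, and Gelfand--Naimark provides an isometric $*$-isomorphism $\Gamma: \mathcal C \to C(X)$ for some compact Hausdorff space $X$, carrying $a$ and $b$ to real-valued continuous functions $\tilde a, \tilde b$ on $X$. Moreover, since for self-adjoint elements the spectrum is the same in $\mathcal C$ as in $\A$, we have $\tilde a(X) = \sigma(a)$ and $\tilde b(X) = \sigma(b)$.

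For part~(a), the relation $\tilde a \tilde b = 0$ means that at every point $x \in X$ at least one of $\tilde a(x), \tilde b(x)$ vanishes. Consequently $|\tilde a(x) + \tilde b(x)| = \max(|\tilde a(x)|, |\tilde b(x)|)$ pointwise, and taking the supremum over $X$ gives $\|a+b\| = \|\tilde a + \tilde b\|_\infty = \max(\|\tilde a\|_\infty, \|\tilde b\|_\infty) = \max(\|a\|,\|b\|)$.

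For part~(b), the hypothesis $a \le b$ translates under $\Gamma$ to $\tilde a(x) \le \tilde b(x)$ for every $x \in X$, because positivity in $C(X)$ means pointwise non-negativity. Since $\tilde a(x) \in \sigma(a)$ and $\tilde b(x) \in \sigma(b)$, the values of $\tilde a$ and $\tilde b$ all lie in $\sigma(a) \cup \sigma(b)$, where $f$ is defined and monotone increasing. Hence $f(\tilde a(x)) \le f(\tilde b(x))$ for every $x$, i.e.\ $f(\tilde a) \le f(\tilde b)$ in $C(X)$, and applying $\Gamma^{-1}$ (which respects the continuous functional calculus on each generator) yields $f(a) \le f(b)$ in $\A$.

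Neither step presents a real obstacle; the only small subtlety is ensuring that $f(\tilde a) = \Gamma(f(a))$ and $f(\tilde b) = \Gamma(f(b))$, which follows from the uniqueness of the continuous functional calculus for self-adjoint elements combined with the fact that $\Gamma$ is an isometric $*$-isomorphism. After this bookkeeping, both conclusions reduce to trivial statements about continuous functions on a compact space.
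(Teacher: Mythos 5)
Your proof is correct and follows the same route as the paper: reduce to the commutative $C^*$-subalgebra generated by $a$ and $b$ (observing in part (a) that $ab=0$ forces $ba=(ab)^*=0$) and verify both assertions pointwise on the Gelfand spectrum. The paper states this more tersely, but the content — including spectral permanence and compatibility of the functional calculus with the Gelfand isomorphism — is the same.
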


\begin{proof}
  In both cases, the unital $C^*$-algebra generated by $a$ and $b$ is commutative. By the Gelfand-Naimark
  theorem, we can therefore regard $a$ and $b$ as real-valued functions on a compact Hausdorff space, where
  both assertions are elementary.
\end{proof}

\begin{lem}
\label{lem:angle_4_sum_orth}
Let $H$ be a Hilbert space and let $M_1,M_2,N_1,N_2 \subset H$ be closed subspaces with
$M_1 \bot M_2, M_1 \bot N_2, M_2 \bot N_1, N_1 \bot N_2$. Then
\begin{equation*}
\fangle(M_1 \oplus M_2, N_1 \oplus N_2) = \max( \fangle(M_1,N_1),\fangle(M_2,N_2)).
\end{equation*}
The same is true with $\fangle_e$ in place of $\fangle$.
\end{lem}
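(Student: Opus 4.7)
The plan is to apply Lemma~\ref{it:angle_standard_prop_square} to the subspaces $M = M_1 \oplus M_2$ and $N = N_1 \oplus N_2$, obtaining $\fangle(M,N) = ||P_M P_N - P_{M \cap N}||$. The orthogonalities $M_1 \perp M_2$ and $N_1 \perp N_2$ split the projections as $P_M = P_{M_1} + P_{M_2}$ and $P_N = P_{N_1} + P_{N_2}$, and the cross-orthogonalities $M_1 \perp N_2$ and $M_2 \perp N_1$ kill the mixed products, leaving
\begin{equation*}
  P_M P_N = P_{M_1} P_{N_1} + P_{M_2} P_{N_2}.
\end{equation*}

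Next, I would identify $M \cap N$ with $(M_1 \cap N_1) \oplus (M_2 \cap N_2)$. The inclusion $\supset$ is trivial; for the converse, observe that the four hypotheses together say precisely that $M_1 + N_1 \perp M_2 + N_2$. Writing $x \in M \cap N$ as $m_1 + m_2 = n_1 + n_2$ with $m_i \in M_i$ and $n_i \in N_i$, the identity $m_1 - n_1 = n_2 - m_2$ places a single vector in both of these orthogonal subspaces, forcing $m_1 = n_1 \in M_1 \cap N_1$ and $m_2 = n_2 \in M_2 \cap N_2$. Setting $T_i = P_{M_i} P_{N_i} - P_{M_i \cap N_i}$, we therefore have $P_M P_N - P_{M \cap N} = T_1 + T_2$.

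To compute the norm of $T_1 + T_2$, I would use that $T_i$ maps $H$ into $M_i$ while $T_i^*$ maps $H$ into $N_i$. The orthogonalities $M_1 \perp M_2$ and $N_1 \perp N_2$ then yield $T_1^* T_2 = T_2^* T_1 = 0$ and $T_1 T_2^* = T_2 T_1^* = 0$, so that
\begin{equation*}
  (T_1 + T_2)(T_1 + T_2)^* = T_1 T_1^* + T_2 T_2^*.
\end{equation*}
The positive operators $T_1 T_1^*$ and $T_2 T_2^*$ have ranges in the orthogonal subspaces $M_1$ and $M_2$, so their product vanishes, and Lemma~\ref{it:C_alg_gelfand_consequences_prod_zero} gives
\begin{equation*}
  ||T_1 + T_2||^2 = ||T_1 T_1^* + T_2 T_2^*|| = \max(||T_1||^2, ||T_2||^2),
\end{equation*}
which is the claimed identity for $\fangle$.

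For the statement concerning $\fangle_e$, the same argument runs unchanged in the Calkin algebra: every projection identity used above passes to the quotient, and Lemma~\ref{it:C_alg_gelfand_consequences_prod_zero} applies in any unital $C^*$-algebra. I expect the only step requiring genuine thought to be the decomposition of $M \cap N$; the rest is formal manipulation with the four orthogonality relations.
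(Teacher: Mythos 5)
Your proof is correct and follows essentially the same route as the paper's: both reduce $M \cap N$ to $(M_1 \cap N_1) \oplus (M_2 \cap N_2)$, split the relevant operator into two mutually orthogonal self-adjoint pieces, and invoke Lemma~\ref{it:C_alg_gelfand_consequences_prod_zero}. The only cosmetic difference is that the paper starts from the pre-squared form $\fangle(M,N)^2 = \|P_N P_M P_N - P_{M\cap N}\|$ so that the two summands are already self-adjoint, whereas you work with $P_M P_N - P_{M\cap N} = T_1 + T_2$ and pass to $TT^*$ at the end via the $C^*$-identity.
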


\begin{proof}
The assertion can be shown using the definition of the Friedrichs angle
or working with projections. The latter has the advantage of proving the
claim for the essential Friedrichs angle at the same time.

First, we note that the assumptions on the subspaces imply that
\begin{equation*}
(M_1 \oplus M_2) \cap (N_1 \oplus N_2) = (M_1 \cap N_1) \oplus (M_2 \cap N_2).
\end{equation*}
Indeed, if $m_1 + m_2 = n_1 + n_2$ is an element of the space on the left-hand side,
with $m_i \in M_i$ and $n_i \in N_i$ for $i=1,2$, then
$m_1 - n_1 = n_2 - m_2$, and the orthogonality relations show that this vector is zero.
Hence $m_1 \in M_1 \cap N_1$ and $m_2 \in M_2 \cap N_2$, thus proving the non-trivial
inclusion. Using the orthogonality relations once again, we conclude that
\begin{align*}
&P_{N_1 \oplus N_2} P_{M_1 \oplus M_2} P_{N_1 \oplus N_2} - P_{ (M_1 \oplus M_2) \cap (N_1 \oplus N_2)} \\
= (&P_{N_1}+ P_{N_2}) (P_{M_1} + P_{M_2})(P_{N_1} + P_{N_2}) - (P_{M_1 \cap N_1} + P_{M_2 \cap N_2}) \\
= (&P_{N_1} P_{M_1} P_{N_1} - P_{M_1 \cap N_1}) + ( P_{N_2} P_{M_2} P_{N_2} - P_{M_2 \cap N_2}).
\end{align*}
Since
\begin{equation*}
(P_{N_1} P_{M_1} P_{N_1} - P_{M_1 \cap N_1}) ( P_{N_2} P_{M_2} P_{N_2} - P_{M_2 \cap N_2}) = 0,
\end{equation*}
both assertions follow from Lemma \ref{it:C_alg_gelfand_consequences_prod_zero}.
\end{proof}

Tensoring with another Hilbert space does not make the angle worse.
\begin{lem}
\label{lem:angle_tensor_same_space}
Let $H$ be a Hilbert space and let $M,N \subset H$  be closed subspaces.
If $E$ is another non-trivial Hilbert space, then
\begin{equation*}
\fangle(M \otimes E, N \otimes E) = \fangle(M,N).
\end{equation*}
\end{lem}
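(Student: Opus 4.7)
The plan is to use the projection formula from Lemma \ref{it:angle_standard_prop_square}, which says $\fangle(M,N)^2 = ||P_N P_M P_N - P_{M \cap N}||$. Applied to the tensored subspaces, the inequality and equality will both fall out of the elementary behavior of projections under tensoring with the identity on $E$.

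The first key observation is that the orthogonal projection onto $M \otimes E$ is $P_M \otimes I_E$, and likewise $P_{N \otimes E} = P_N \otimes I_E$. One also needs the identity $(M \otimes E) \cap (N \otimes E) = (M \cap N) \otimes E$, which can either be checked directly using an orthonormal basis of $E$, or derived from the fact that for any two closed subspaces $K_1, K_2$ of a Hilbert space, $P_{K_1 \cap K_2}$ is the strong limit of $(P_{K_1} P_{K_2})^n$; tensoring commutes with this limit, so $P_{(M \otimes E) \cap (N \otimes E)} = P_{M \cap N} \otimes I_E$.

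Plugging these identities into the formula from Lemma \ref{it:angle_standard_prop_square} gives
\begin{equation*}
  P_{N \otimes E} P_{M \otimes E} P_{N \otimes E} - P_{(M \otimes E) \cap (N \otimes E)}
  = \bigl( P_N P_M P_N - P_{M \cap N} \bigr) \otimes I_E.
\end{equation*}
Since $E$ is non-trivial, the norm of an operator of the form $T \otimes I_E$ equals $||T||$. Therefore $\fangle(M \otimes E, N \otimes E)^2 = \fangle(M,N)^2$, and taking square roots finishes the proof.

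I do not expect any real obstacle here; everything reduces to the observation that tensoring with $I_E$ is an isometric $*$-homomorphism on the $C^*$-algebra generated by $P_M$ and $P_N$. The only point requiring a brief justification is the intersection identity $(M \otimes E) \cap (N \otimes E) = (M \cap N) \otimes E$, and even that follows painlessly from the strong-limit description of the projection onto an intersection.
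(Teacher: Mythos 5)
Your proof is correct and follows essentially the same route as the paper's: both reduce to the identities $P_{K\otimes E}=P_K\otimes I_E$ and $(M\otimes E)\cap(N\otimes E)=(M\cap N)\otimes E$, and then use part (b) of Lemma \ref{lem:angle_standard_prop} together with the fact that $\|T\otimes I_E\|=\|T\|$. The only cosmetic difference is that you invoke the squared form $\fangle(M,N)^2=\|P_N P_M P_N-P_{M\cap N}\|$ while the paper uses $\fangle(M,N)=\|P_M P_N-P_{M\cap N}\|$.
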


\begin{proof}
First, note that $(M \cap N) \otimes E = (M \otimes E) \cap (N \otimes E)$. Since
$P_{K \otimes E} = P_K \otimes P_E$ for any closed subspace $K \subset H$, we have
\begin{align*}
||P_{M \otimes E} P_{N \otimes E} - P_{(M \otimes E) \cap (N \otimes E)}||
&=
||P_{M \otimes E} P_{N \otimes E} - P_{(M  \cap N)  \otimes E}|| \\
&= || (P_M P_N - P_{M \cap N} ) \otimes 1_E|| \\
&= ||P_M P_N - P_{ M \cap N}||. \qedhere
\end{align*}
\end{proof}

We can now prove the main result of this section.
It enables the desired reduction to subspaces with trivial joint intersection.
\begin{lem}
  \label{lem:angle_several_sum_perp}
  \label{cor:Fock_closed_reduction}
  Let $V_1,\ldots,V_r \subset \C^d$ be subspaces and let $V= V_1 \cap \ldots \cap V_r \neq \{0\}$.
  Suppose that $\F(V_1) + \ldots + \F(V_{r-1})$ and $\F(V_1 \ominus V) + \ldots + \F(V_{r-1} \ominus V)$
  are closed. Then $\F(V_1) + \ldots + \F (V_r)$
  is closed if and only if $\F(V_1 \ominus V) + \ldots + \F(V_r \ominus V)$ is closed.
\end{lem}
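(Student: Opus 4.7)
My plan is to reduce the assertion to an equality of Friedrichs angles, which I then prove by decomposing each $V_k^{\otimes n}$ according to which tensor positions lie in $V$ and which lie in $W_k = V_k \ominus V$. Since $V_k = V \oplus W_k$ orthogonally, set
\begin{align*}
  M &= \fock{V_1} + \ldots + \fock{V_{r-1}}, & N &= \fock{V_r}, \\
  M' &= \fock{W_1} + \ldots + \fock{W_{r-1}}, & N' &= \fock{W_r}.
\end{align*}
By hypothesis both $M$ and $M'$ are closed, and $N, N'$ are closed as Fock spaces. Lemma \ref{it:angle_standard_prop_sum_closed} therefore reduces the claim to showing $\fangle(M, N) = \fangle(M', N')$.

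For each $n$ and each $S \subset \{1,\ldots,n\}$, let $H^k_{S,n} \subset (\C^d)^{\otimes n}$ denote the subspace of tensors whose $i$-th factor lies in $V$ for $i \in S$ and in $W_k$ for $i \notin S$. Since $W_k \perp V$, one has the orthogonal decomposition $V_k^{\otimes n} = \bigoplus_S H^k_{S,n}$; moreover $H^k_{S,n} \perp H^l_{S',n}$ whenever $S \neq S'$, because at any position of the symmetric difference of $S$ and $S'$ one factor lies in $V$ and the other in $V^\perp \supset W_\bullet$. Setting $M^S_n = \sum_{k=1}^{r-1} H^k_{S,n}$, I obtain compatible orthogonal decompositions $M_n = \bigoplus_S M^S_n$ and $N_n = \bigoplus_S H^r_{S,n}$ of the degree-$n$ graded components, and iterated application of Lemma \ref{lem:angle_4_sum_orth} yields
\begin{equation*}
  \fangle(M_n, N_n) = \max_S \fangle(M^S_n, H^r_{S,n}).
\end{equation*}

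Now I strip off the common $V$-factor. For $S$ with $|S| = j$ and $m = n-j$, the unitary that permutes tensor factors so that the positions in $S$ come first identifies $H^k_{S,n}$ with $V^{\otimes j} \otimes W_k^{\otimes m}$; crucially, the permutation depends only on $S$, so it realizes the same $V^{\otimes j}$ factor for every $k$. Applying Lemma \ref{lem:angle_tensor_same_space} with $E = V^{\otimes j}$ gives
\begin{equation*}
  \fangle(M^S_n, H^r_{S,n}) = \fangle\big(W_1^{\otimes m} + \ldots + W_{r-1}^{\otimes m},\ W_r^{\otimes m}\big) = \fangle(M'_m, N'_m),
\end{equation*}
where $M'_m, N'_m$ are the degree-$m$ graded components of $M'$ and $N'$. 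Hence $\fangle(M_n, N_n) = \max_{0 \le m \le n} \fangle(M'_m, N'_m)$, and taking $\sup_n$ together with Lemma \ref{lem:angle_graded_hilb_space} delivers $\fangle(M,N) = \sup_m \fangle(M'_m, N'_m) = \fangle(M', N')$.

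The main hurdle is setting up the position-pattern decomposition cleanly so that $M_n$, $N_n$ and their intersection all split compatibly along the subsets $S \subset \{1,\ldots,n\}$; once this bookkeeping is in place, the equality of angles follows directly from Lemmas \ref{lem:angle_4_sum_orth}, \ref{lem:angle_tensor_same_space}, and \ref{lem:angle_graded_hilb_space}, and closedness on both sides is then read off via Lemma \ref{it:angle_standard_prop_sum_closed}.
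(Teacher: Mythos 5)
Your proposal is correct, and the underlying decomposition is the same as the paper's: split each $V_k^{\otimes n}$ according to which tensor positions carry a $V$-factor versus a $W_k$-factor, reduce to the orthogonal pieces via Lemma \ref{lem:angle_4_sum_orth}, factor out the common $V$-part with Lemma \ref{lem:angle_tensor_same_space}, and translate back through Lemmas \ref{lem:angle_graded_hilb_space} and \ref{it:angle_standard_prop_sum_closed}.

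The one genuine difference is in execution: the paper peels off one tensor position at a time, running an induction on $n$ with an auxiliary ``prefix'' exponent $k$ tracking the accumulated formula \eqref{eqn:sum_closed_red_proof}, whereas you unroll that recursion in one step, indexing the $2^n$ pieces of $(\C^d)^{\otimes n}$ by subsets $S\subset\{1,\dots,n\}$ and then applying a position-permuting unitary to gather all $V$-factors to the front. Your version avoids the nested induction at the cost of the combinatorial bookkeeping (one must note, as you do, that the permutation is the same for every $k$, that it maps $M^S_n$ to $V^{\otimes|S|}\otimes M'_{n-|S|}$, and that the extended $2^n$-term version of Lemma \ref{lem:angle_4_sum_orth} follows by iterating the two-term case). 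Both routes are sound, and neither is appreciably shorter; yours makes the shape of the answer $\max_{0\le m\le n}\fangle(M'_m,N'_m)$ visible without reference to the intermediate parameter $k$.
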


\begin{proof}
  We claim that it suffices
  to prove the following assertion:
  If $W_1, \ldots, W_r \subset \C^d$ are subspaces,
  and if $E \subset \C^d$ is a non-trivial subspace that is orthogonal
  to each $W_i$, then
  \begin{equation}\begin{split}
    \label{eqn:sum_closed_red_formula}
    &\fangle( (W_1 \oplus E)^{\otimes n} + \ldots +
    (W_{r-1} \oplus E)^{\otimes n} , (W_r \oplus E)^{\otimes n}) \\
    = \quad &\max_{j=1,\ldots,n}
    \fangle( W_1^{\otimes j} +
    \ldots + W_{r-1}^{\otimes j}, W_r^{\otimes j} ).
  \end{split}\end{equation}
  Indeed, setting $E=V$ and $W_i = V_i \ominus V$ for each $i$, we see from
  Lemma \ref{lem:angle_graded_hilb_space} and
  Lemma \ref{it:angle_standard_prop_sum_closed}
  that this assertion will prove the lemma.

  In fact, we will show that
  \begin{equation}
    \label{eqn:sum_closed_red_proof}
    \begin{split}
    &\fangle \Big( \sum_{i=1}^{r-1} W_i^{\otimes k} \otimes (W_i \oplus E)^{\otimes n}
    , W_r^{\otimes k} \otimes (W_r \oplus E)^{\otimes n} \Big) \\
    = \quad &\max_{j=k,\ldots,k+n}
    \fangle \Big( \sum_{i=1}^{r-1} W_i^{\otimes j}, W_r^{\otimes j} \Big)
    \end{split}
  \end{equation}
  holds for all natural numbers $k$ and $n$. The assertion \eqref{eqn:sum_closed_red_formula} corresponds
  to the case $k=0$, with the usual convention $W^{\otimes 0} = \C$ for a subspace $W \subset \C^d$.
  We proceed by induction on $n$. If $n=0$, this is trivial. So suppose that $n \ge 1$
  and that the assertion has been proved for $n-1$. First, we note
  that
  \begin{align*}
    &W_i^{\otimes k} \otimes (W_i \oplus E)^{\otimes n} \\
    = \quad &\big( W_i^{\otimes k+1} \otimes (W_i \oplus E)^{\otimes n-1} \big)
    \oplus \big( W_i^{\otimes k} \otimes E \otimes (W_i \oplus E)^{\otimes n-1}
    \big),
  \end{align*}
  holds for all $i$. So defining
  \begin{align*}
    M_1 &= \sum_{i=1}^{r-1} W_i^{\otimes k+1} \otimes (W_i \oplus E)^{\otimes n-1} \quad \tand \\
    M_2 &= \sum_{i=1}^{r-1} W_i^{\otimes k} \otimes E \otimes (W_i \oplus E)^{\otimes n-1},
  \end{align*}
  as well as
  \begin{align*}
    N_1 &= W_r^{\otimes k+1} \otimes (W_r \oplus E)^{\otimes n-1} \quad \tand \\
    N_2 &= W_r^{\otimes k} \otimes E \otimes (W_r \oplus E)^{\otimes n-1},
  \end{align*}
  we have
  \begin{align*}
    \sum_{i=1}^{r-1} W_i^{\otimes k} \otimes (W_i \oplus E)^{\otimes n} &= M_1 + M_2 \quad \tand \\
    W_r^{\otimes k} \otimes (W_r \oplus E)^{\otimes n} &= N_1 + N_2.
  \end{align*}
  Since $E$ is orthogonal to each $W_i$, we see that $M_1 \bot M_2, M_1 \bot N_2, M_2 \bot N_1$
  and $N_1 \bot N_2$.
  Consequently, Lemma \ref{lem:angle_4_sum_orth} applies to show that
  the left-hand side of \eqref{eqn:sum_closed_red_proof} equals
  \begin{equation*}
    \max( \fangle(M_1,N_1), \fangle(M_2,N_2)).
  \end{equation*}
  By induction hypothesis,
  \begin{equation*}
    \fangle(M_1,N_1) = \max_{j=k+1, \ldots , k+n}
    \fangle \Big( \sum_{i=1}^{r-1} W_i^{\otimes j}, W_r^{\otimes j} \Big).
  \end{equation*}
  Moreover, an application of Lemma \ref{lem:angle_tensor_same_space} combined with
  the inductive hypothesis shows that
  \begin{equation*}
    \fangle(M_2,N_2) = \max_{j=k, \ldots, k+n-1}
    \fangle \Big( \sum_{i=1}^{r-1} W_i^{\otimes j}, W_r^{\otimes j} \Big),
  \end{equation*}
  which finishes the proof.
\end{proof}

\begin{exa}
  With the formula derived in the proof of the preceding lemma, we can already determine
  the Friedrichs angle between two full Fock spaces.
  To begin with, suppose that $V_1$ and $V_2$
  are two subspaces in $\C^{d}$ such that $V_1 \cap V_2 = \{0\}$. Then
  Lemma \ref{it:angle_standard_prop_square}
  yields for all natural numbers $n$ the identity
  \begin{equation*}
    \fangle(V_1^{\otimes n}, V_2^{\otimes n}) = ||P_{V_1}^{\otimes n} P_{V_2}^{\otimes n}||
    = ||P_{V_1} P_{V_2}||^n = \fangle(V_1,V_2)^n.
  \end{equation*}
  Note that $\fangle(V_1,V_2) < 1$ because $\C^d$ is finite dimensional.
  If $V_1 \cap V_2 \neq \{0\}$, we set $W_i = V_i \ominus (V_1 \cap V_2)$ for $i=1,2$.
  By formula \eqref{eqn:sum_closed_red_formula},
  we have
  \begin{equation*}
    \fangle(V_1^{\otimes n}, V_2^{\otimes n}) = \max_{j=1,\ldots,n} \fangle(W_1^{\otimes j},
    W_2^{\otimes j})
  \end{equation*}
  for all $n$.
  Since $W_1$ and $W_2$ have trivial intersection,
  \begin{equation*}
    \fangle(W_1^{\otimes j}, W_2^{\otimes j}) = \fangle(W_1,W_2)^{j} = \fangle(V_1,V_2)^j
  \end{equation*}
  by what we have just proved, so
  \begin{equation*}
    \fangle(V_1^{\otimes n}, V_2^{\otimes n}) = \fangle(V_1,V_2)
  \end{equation*}
  for all $n$.
  As an application of Lemma \ref{lem:angle_graded_hilb_space}, we see that in any case,
  \begin{equation*}
    \fangle(\fock{V_1},\fock{V_2}) = \fangle(V_1,V_2),
  \end{equation*}
  while Lemma \ref{lem:ess_angle_limsup} shows that
  \begin{equation*}
    \fangle_e(\fock{V_1},\fock{V_2})
    = \begin{cases} \fangle(V_1,V_2), & \text{ if } V_1 \cap V_2 \neq \{0\}, \\
    0, & \text{ if } V_1 \cap V_2 = \{0\}.
    \end{cases}
  \end{equation*}
  In particular, we see that sums of \emph{two} Fock spaces are closed.
\end{exa}

We conclude this section with a lemma about the case of trivial joint intersection.
In view of the definition of the essential Friedrichs angle, it indicates why the reduction
to this case will be helpful.
\begin{lem}
  \label{lem:proj_product_compact}
  Let $V_1,\ldots, V_r \subset \C^d$ be subspaces with $V_1 \cap \ldots \cap V_r = \{0\}$.
  Set $M_i = \F(V_i)$ for $i=1,\ldots, r$. Then
  $P_{M_1} \ldots P_{M_r}$ is a compact operator.
\end{lem}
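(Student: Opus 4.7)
The plan is to exploit the natural grading $\fock{\C^d} = \bigoplus_{n=0}^\infty (\C^d)^{\otimes n}$. Since each $M_i = \fock{V_i}$ is a graded subspace (with $M_i \cap (\C^d)^{\otimes n} = V_i^{\otimes n}$), the projection $P_{M_i}$ commutes with every grading projection $P_n$, and its restriction to degree $n$ is $P_{V_i}^{\otimes n}$. Using $(A \otimes \cdots \otimes A)(B \otimes \cdots \otimes B) = (AB) \otimes \cdots \otimes (AB)$ repeatedly, I would observe that on the degree-$n$ component,
\begin{equation*}
  P_{M_1} P_{M_2} \cdots P_{M_r} \big|_{(\C^d)^{\otimes n}} = P_{V_1}^{\otimes n} P_{V_2}^{\otimes n} \cdots P_{V_r}^{\otimes n} = (P_{V_1} P_{V_2} \cdots P_{V_r})^{\otimes n}.
\end{equation*}

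Next, I would apply Lemma \ref{it:proj_intersection_point_spectrum_fin_dim}: because $\C^d$ is finite dimensional and the joint intersection $V_1 \cap \ldots \cap V_r$ is trivial, the scalar
\begin{equation*}
  \alpha := \|P_{V_1} P_{V_2} \cdots P_{V_r}\|
\end{equation*}
satisfies $\alpha < 1$. Since the operator norm is multiplicative on tensor powers, the restriction of $P_{M_1} \cdots P_{M_r}$ to the degree-$n$ summand has norm exactly $\alpha^n$.

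Finally, I would show compactness by approximation with finite-rank operators. Each truncation
\begin{equation*}
  T_N := \bigoplus_{n=0}^{N} (P_{V_1} P_{V_2} \cdots P_{V_r})^{\otimes n}
\end{equation*}
has finite rank (as $(\C^d)^{\otimes n}$ is finite dimensional for every $n$), and since $P_{M_1} \cdots P_{M_r}$ is the direct sum of the same tensor powers, the difference is block-diagonal with $n$-th block of norm $\alpha^n$. Hence $\|P_{M_1} \cdots P_{M_r} - T_N\| = \sup_{n > N} \alpha^n \to 0$ as $N \to \infty$. Therefore $P_{M_1} \cdots P_{M_r}$ is a norm-limit of finite-rank operators, and so is compact. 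There is no real obstacle here; the only step requiring input beyond elementary manipulation is the strict inequality $\alpha < 1$, which is exactly what Lemma \ref{lem:proj_intersection_point_spectrum} is designed to supply in the finite-dimensional setting.
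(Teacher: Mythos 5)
Your proof is correct and takes essentially the same route as the paper: block-diagonalize $P_{M_1}\cdots P_{M_r}$ over the grading $\fock{\C^d}=\bigoplus_n (\C^d)^{\otimes n}$, observe that the degree-$n$ block is $(P_{V_1}\cdots P_{V_r})^{\otimes n}$ with norm $\alpha^n$ where $\alpha=\|P_{V_1}\cdots P_{V_r}\|<1$ by Lemma~\ref{it:proj_intersection_point_spectrum_fin_dim}, and conclude compactness from the decay of block norms. The only difference is that you spell out the final finite-rank truncation argument, which the paper leaves as ``easy to see.''
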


\begin{proof}
  We note that for each $i$,
  \begin{equation*}
    P_{M_i} = \bigoplus_{n=0}^\infty P_{V_i}^{\otimes n},
  \end{equation*}
  hence
  \begin{equation*}
    P_{M_1} \ldots P_{M_r} = \bigoplus_{n=0}^\infty (P_{V_1} \ldots P_{V_r})^{\otimes n}.
  \end{equation*}
  Since $V_1 \cap \ldots \cap V_r = \{0\}$, and since $\C^d$ is finite dimensional,
  $||P_{V_1} \ldots P_{V_r}|| < 1$ by Lemma \ref{it:proj_intersection_point_spectrum_fin_dim}.
  Therefore,
  \begin{equation*}
    ||(P_{V_1} \ldots P_{V_r})^{\otimes n}|| =
    ||(P_{V_1} \ldots P_{V_r})||^n \xrightarrow{n \to \infty} 0.
  \end{equation*}
  From this observation, it is easy to see that $P_{M_1} \ldots P_{M_r}$ is compact.
\end{proof}

\section{A closedness result}

In this section, we will deduce a closedness result which will form the inductive step in the proof
of our general result on the closedness of algebraic sums of $r$ Fock spaces.
Because of Lemma \ref{lem:angle_several_sum_perp} and Lemma
\ref{lem:proj_product_compact}, we will
consider the following situation throughout this section:
Let $r \ge 2$, and let $M_1,\ldots,M_r$
be closed subspaces of a Hilbert space $H$ which satisfy the following two conditions:
\begin{enumerate}[label=\normalfont{(\alph*)}]
\item Any algebraic sum of $r-1$ or fewer subspaces of the $M_i$ is closed, that is, for any subset
$\{i_1,\ldots,i_k\} \subset \{1,\ldots,r\}$ with $k \le r-1$,
the sum
\begin{equation*}
  M_{i_1} + \ldots + M_{i_k}
\end{equation*}
is closed.
\label{it:cond_a}
\item Any product of the $P_{M_i}$ containing each $P_{M_i}$ at least once is compact, that is,
  for any collection of (not necessarily distinct) indices $i_1,\ldots,i_k$ with $\{i_1,\ldots,i_k \}
  = \{1,\ldots,r\}$, the operator
  \begin{equation*}
    P_{M_{i_1}} P_{M_{i_2}} \ldots P_{M_{i_k}}
  \end{equation*}
  is compact.
  \label{it:cond_b}
\end{enumerate}
Our goal is to show that under these assumptions, the sum $M_1 + \ldots + M_r$ is closed.
Note that for $r=2$, the first condition is empty, while the second is equivalent to demanding that
$P_{M_1} P_{M_2}$ be compact.

Recall that for a closed subspace $M \subset H$, we denote the equivalence class of $P_M$ in the
Calkin algebra by $p_M$. Moreover, we define
$\A$ to be the unital $C^*$-subalgebra of the Calkin algebra
generated by $p_{M_1}, \ldots,p_{M_r}$.
The following proposition is the key step in proving that the sum $M_1 + \ldots + M_r$ is closed. It crucially
depends on condition \ref{it:cond_b}.

\begin{prop}
  \label{prop:special_representation}
  For any irreducible representation $\pi$ of $\A$ on a Hilbert space $K$,
  there is an $i \in \{1,\ldots,r\}$ such that $\pi(p_{M_i}) = 0$.

  In particular, there are representations $\pi_1, \ldots, \pi_r$ of $\A$ such that
  $\pi_i(p_{M_i}) = 0$ for each $i$, and such that $\pi = \bigoplus_{i=1}^r \pi_i$
  is a faithful representation of $\A$.
\end{prop}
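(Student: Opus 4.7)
The plan is to introduce the closed two-sided ideals $J_i = \overline{\A \, p_{M_i} \A}$ of $\A$ and observe that the ideal product $J_1 J_2 \cdots J_r$ vanishes. Once this is established, the first assertion follows quickly: by a theorem of Dixmier, every primitive ideal of a $C^*$-algebra is prime, and the kernel of any irreducible representation $\pi$ is primitive. Since $J_1 J_2 \cdots J_r = \{0\} \subseteq \ker \pi$, primeness yields an index $i$ with $J_i \subseteq \ker \pi$, so in particular $\pi(p_{M_i}) = 0$.

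To verify that $J_1 J_2 \cdots J_r = 0$, I would reduce to the level of generators using density and continuity. The product ideal is the closed linear span of products $x_1 x_2 \cdots x_r$ with $x_i \in \A \, p_{M_i} \A$, and each such $x_i$ is a limit of finite sums of elementary terms $a_i p_{M_i} b_i$ with $a_i, b_i$ polynomials in $p_{M_1}, \ldots, p_{M_r}$. By continuity of multiplication, it is enough to show that
\begin{equation*}
  (a_1 p_{M_1} b_1)(a_2 p_{M_2} b_2) \cdots (a_r p_{M_r} b_r) = 0
\end{equation*}
for such polynomial choices. Expanded, this product is a finite linear combination of words in $p_{M_1}, \ldots, p_{M_r}$, and by construction every resulting word contains the letter $p_{M_i}$ at the position between $a_i$ and $b_i$. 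Thus each word uses every generator at least once, and as such a product of Calkin classes $p_{M_{i_1}} \cdots p_{M_{i_k}}$ with $\{i_1, \ldots, i_k\} = \{1, \ldots, r\}$, it vanishes by hypothesis (b). The whole expansion is therefore zero.

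For the second assertion, I would invoke the standard fact that every $C^*$-algebra admits a faithful representation that decomposes as a direct sum of irreducibles (for instance, the direct sum $\bigoplus_{\phi} \pi_{\phi}$ of GNS representations associated with all pure states $\phi$). Write such a faithful representation of $\A$ as $\bigoplus_{j \in \Lambda} \rho_j$ with each $\rho_j$ irreducible, and for each $j$ use the first part to pick $i(j) \in \{1, \ldots, r\}$ with $\rho_j(p_{M_{i(j)}}) = 0$. Setting $\pi_i := \bigoplus_{j \,:\, i(j) = i} \rho_j$, one obtains $\pi_i(p_{M_i}) = 0$ by construction, and $\bigoplus_{i=1}^r \pi_i$ is unitarily equivalent to the original faithful representation, hence faithful. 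The only genuine obstacle is the first step; once the ideal identity $J_1 \cdots J_r = 0$ is in hand, the rest is routine $C^*$-algebraic machinery.
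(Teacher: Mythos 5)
Your argument is correct, and it takes a genuinely different route to the first assertion. The paper argues directly: from condition (b) and continuity, one deduces that $\pi(p_{M_1} a_1 p_{M_2} a_2 \cdots a_{r-1} p_{M_r}) = 0$ for all $a_1, \ldots, a_{r-1} \in \A$; then, assuming $\pi(p_{M_2}), \ldots, \pi(p_{M_r})$ are all non-zero, one ``peels off'' the rightmost factor using the irreducibility fact $\bigvee_{a \in \A} \pi(a p_{M_r}) K = K$, and iterates until only $\pi(p_{M_1}) = 0$ remains. You package the same underlying observation --- that any word in the generators involving every $p_{M_j}$ is already zero in $\A$ --- into the ideal identity $J_1 J_2 \cdots J_r = 0$ and then invoke the general $C^*$-theorem that kernels of irreducible representations are prime, so one of the $J_i$ lies in $\ker \pi$. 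The two proofs are closely related in spirit: the paper's peeling argument is essentially a self-contained, ad hoc proof of ``primitive implies prime'' specialized to this particular ideal product, while yours is shorter and more structural at the cost of importing a deeper black box. (One minor point: you apply primeness iteratively to the $r$-fold product $J_1 \cdots J_r$ rather than just a pairwise product; that induction is routine but worth a word.) Both proofs handle the second assertion identically, by decomposing a faithful direct sum of GNS representations of pure states according to which generator each one kills.
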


\begin{proof}
  We write $p_i = p_{M_i}$.
  Suppose that $\pi(p_2), \ldots , \pi(p_r)$ are all non-zero. We have to prove that $\pi(p_1) = 0$.
  First, note that by condition \ref{it:cond_b},
  \begin{equation}
    \label{eqn:irred_rep}
    \pi(p_1 a_1 p_2 a_2 \ldots a_{r-1} p_r) = 0
  \end{equation}
  holds if each of the $a_i$ is a monomial in the $p_j$. By linearity and continuity,
  \eqref{eqn:irred_rep} therefore holds for all $a_1, \ldots, a_{r-1} \in  \A$.

  Since $\pi$ is irreducible, and since $\pi(p_r) \neq 0$, we have
  \begin{equation*}
    \bigvee_{a_{r-1} \in \A} \pi(a_{r-1} p_r) K = K.
  \end{equation*}
  Consequently, \eqref{eqn:irred_rep} implies that $\pi(p_1 a_1 p_2 a_2 \ldots a_{r-2} p_{r-1})=0$.
  Iterating this process yields the conclusion $\pi(p_1) = 0$, as desired.

   To establish the additional assertion, let $\pi_i$ be the direct sum of all irreducible GNS representations
   $\pi_f$ with $\pi_f(p_i)=0$, which is understood to be zero if there are no such representations.
   Then $\pi = \bigoplus_{i=1}^r \pi_i$ contains every irreducible GNS representation
   of $\A$ as a summand by the first part, and is therefore faithful.
\end{proof}

We will use the preceding proposition to get a good estimate of the essential Friedrichs angle
\begin{equation*}
  \fangle_e(M_1 + \ldots + M_{r-1}, M_r) =
  ||p_{M_1 + \ldots + M_{r-1}} p_{M_r} - p_{(M_1 + \ldots + M_{r-1}) \cap M_r}||.
\end{equation*}
To this end, we have to make sure that all occurring elements belong to $\A$.
Part of this is done by the following lemma.

\begin{lem}
  \label{lem:sum_closed_spectrum_and_algebra_membership}
  Let $H$ be a Hilbert space and let $M,N,N_1,\ldots,N_s \subset H$ be closed subspaces.
  \begin{enumerate}[label=\normalfont{(\alph*)},ref={\thelem~(\alph*)}]
  \item The algebraic sum $N_1 + \ldots + N_s$ is closed if and only if $0$ is not a cluster
  point of the spectrum of the positive operator $P_{N_1} + \ldots + P_{N_s}$. In this case, the image
  of the operator $P_{N_1} + \ldots + P_{N_r}$ equals $N_1 + \ldots + N_r$.
  \label{it:sum_closed_spectrum_and_algebra_membership_spectrum}
  \item If $N_1 + \ldots + N_s$ is closed, then
    \begin{equation*}
      P_{N_1 + \ldots + N_s} = \chi_{(0,\infty)} (P_{N_1} + \ldots + P_{N_s}),
    \end{equation*}
    where $\chi_{(0,\infty)}$ denotes the indicator function of $(0,\infty)$.
    In particular, the projection $P_{N_1 + \ldots + N_s}$
    belongs to the $C^*$-algebra generated by $P_{N_1}, \ldots, P_{N_s}$.
  \label{it:sum_closed_spectrum_and_algebra_membership_sum}
  \item $M+N$ is closed if and only if the sequence $((P_{M} P_{N} P_{M})^n)_n$ converges
  in norm to $P_{M \cap N}$. In particular, if $M+N$ is closed, then $P_{M \cap N}$ belongs to
  the $C^*$-algebra generated by $P_{M}$ and $P_{N}$.
  \label{it:sum_closed_spectrum_and_algebra_membership_intersection}
  \end{enumerate}
\end{lem}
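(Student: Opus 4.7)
For part (a), I plan to realize the positive operator $T = P_{N_1} + \ldots + P_{N_s}$ as $SS^*$, where $S: \bigoplus_{i=1}^{s} N_i \to H$ is the bounded map $(x_1, \ldots, x_s) \mapsto x_1 + \ldots + x_s$; a direct computation gives $S^* y = (P_{N_1} y, \ldots, P_{N_s} y)$ and hence $SS^* = T$. Since $\ran(S) = N_1 + \ldots + N_s$, the standard fact that $S$ has closed range if and only if $SS^*$ does reduces the closedness question to a question about $T$. For positive self-adjoint $T$, closed range is equivalent to $0$ not being a cluster point of $\sigma(T)$, which follows from the spectral gap decomposition $T = 0 \oplus T_1$ with $T_1$ invertible on $(\ker T)^\perp$. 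When closedness holds, $\ran(SS^*) = \overline{\ran(S)} = \ran(S)$ gives the range identity.

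For part (b), under the closedness hypothesis part (a) supplies a gap $\sigma(T) \subset \{0\} \cup [\delta, \|T\|]$ for some $\delta > 0$, on which $\chi_{(0,\infty)}$ agrees with a continuous function. Continuous functional calculus then yields $\chi_{(0,\infty)}(T) \in C^*(T) \subset C^*(P_{N_1}, \ldots, P_{N_s})$, and this is precisely the spectral projection onto $\overline{\ran(T)} = \ran(T) = N_1 + \ldots + N_s$ by (a).

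For part (c), set $B = P_M P_N P_M$. The plan is to exploit the algebraic identity $(B - P_{M \cap N})^n = B^n - P_{M \cap N}$, which follows by induction from $B P_{M \cap N} = P_{M \cap N} B = P_{M \cap N}$; this in turn is a direct consequence of $M \cap N \subset M, N$, which gives $P_{M \cap N} P_M = P_M P_{M \cap N} = P_{M \cap N}$ and similarly for $P_N$. The key norm formula is $\|B - P_{M \cap N}\| = \fangle(M,N)^2$, which is the symmetric twin of Lemma \ref{it:angle_standard_prop_square} obtained by applying the $C^*$-identity to $T = P_N P_M$ rather than $P_M P_N$. Assuming $M + N$ is closed, Lemma \ref{it:angle_standard_prop_sum_closed} gives $\fangle(M,N) < 1$, whence $\|B^n - P_{M \cap N}\| \le \fangle(M,N)^{2n} \to 0$.

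Conversely, suppose $B^n \to P_{M \cap N}$ in norm. Lemma \ref{it:proj_intersection_point_spectrum_kernel} applied to $T = P_N P_M$ yields $\ker(I - B) = M \cap N$, so that $P_{M \cap N}$ is the spectral projection of the positive contraction $B$ at the eigenvalue $1$. Norm convergence of $t \mapsto t^n$ on $\sigma(B) \subset [0,1]$ to the indicator $\chi_{\{1\}}$ forces a spectral gap $\sigma(B) \subset [0,r] \cup \{1\}$ for some $r < 1$; otherwise uniform convergence would fail near a sub-unital accumulation point. Hence $\|B - P_{M \cap N}\| \le r < 1$, so $\fangle(M,N) < 1$ and $M+N$ is closed by Lemma \ref{it:angle_standard_prop_sum_closed}. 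The $C^*$-algebra membership assertion then follows at once from $P_{M \cap N} = \lim_n B^n$. The only delicate step is the spectral-gap argument in the reverse direction of (c); the remaining parts are routine applications of functional calculus and previously established facts about the Friedrichs angle.
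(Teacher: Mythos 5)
Your proof is correct and follows essentially the same route as the paper's. Parts (a) and (b) are carried out exactly as in the paper: realize $P_{N_1}+\cdots+P_{N_s}$ as $SS^*$ for the summation operator $S$ and invoke the closed-range characterization, then use the resulting spectral gap to apply continuous functional calculus. In (c), your forward direction is the paper's entire argument: the identity $(B-P_{M\cap N})^n = B^n - P_{M\cap N}$ (with $B = P_M P_N P_M$), self-adjointness of $B-P_{M\cap N}$, and $\|B-P_{M\cap N}\| = \fangle(M,N)^2$ from Lemma~\ref{it:angle_standard_prop_square} give $\|B^n - P_{M\cap N}\| = \fangle(M,N)^{2n}$ for every $n$, which already yields both implications at once, since $\fangle(M,N)^{2n} \to 0$ if and only if $\fangle(M,N)<1$. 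Your separate spectral-gap argument for the reverse direction is therefore redundant; it is not wrong, but you are re-deriving $\fangle(M,N)<1$ by a more elaborate route when the exact equality you have already established makes it immediate.
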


\begin{proof}
  (a) Consider the continuous operator
  \begin{equation*}
    T: \bigoplus_{i=1}^s N_i \to H, \quad (x_i)_{i=1}^s \mapsto \sum_{i=1}^s x_i.
  \end{equation*}
  Clearly, the image of $T$ equals $N_1 + \ldots + N_r$.
  Consequently, this sum is closed if and only if the image of $T$ is closed, which, in turn,
  happens if and only if the image
  of $T^*$ is closed.
  It is easy to check that $T^*$ is given by
  $T^* x = (P_{N_1} x, \ldots, P_{N_s} x)$, so $T T^* = P_{N_1} + \ldots + P_{N_s}$.
  Hence the assertion follows from the general fact that the range of an operator $S$ is closed
  if and only if $0$ is not a cluster point of $\sigma(S^* S)$. The additional claim is now obvious.

  (b) Part (a) shows that the restriction of $\chi_{(0,\infty)}$ to $\sigma(P_{N_1} + \ldots + P_{N_s})$
  is continuous, so
  \begin{equation*}
    P = \chi_{(0,\infty)} (P_{N_1} + \ldots + P_{N_s})
  \end{equation*}
  belongs to the
$C^*$-algebra generated by $P_{N_1}, \ldots, P_{N_s}$. By standard properties of the functional calculus, $P$
is the orthogonal projection onto the range of $P_{N_1} + \ldots + P_{N_s}$, which
is $N_1 + \ldots + N_s$.

(c) For any $n \in \N$, we have
\begin{equation*}
||(P_M P_N P_M)^n - P_{M \cap N}|| = || (P_M P_N P_M - P_{M \cap N})^n|| = \fangle(M,N)^{2 n},
\end{equation*}
which converges to zero if and only if $\fangle(M,N) < 1$. This, in turn, is equivalent to $M+N$ being
closed by Lemma \ref{it:angle_standard_prop_sum_closed}.
\end{proof}

\begin{rem}
  \label{rem:alternating_projections}
  Statement (c) in the preceding lemma is just part of a bigger picture:
  For any closed subspaces $M,N \subset H$, the sequence $((P_M P_N)^n)_n$ (and hence
  also $((P_M P_N P_M)^n)_n = ((P_M P_N)^n P_M)_n$) converges in the strong
  operator topology to $P_{M \cap N}$, and the convergence is in norm if and only if $M+N$ is closed,
  see for example \cite[Section 3]{deutsch95}.
\end{rem}

Because of condition \ref{it:cond_a}, the preceding lemma
shows that $p_{M_1 + \ldots + M_{r-1}} \in \A$. If $r \ge 3$, we define for $i=1,\ldots,r-1$
\begin{equation*}
    S_i = M_1 + \ldots + \widehat{M_i} + \ldots + M_{r-1},
\end{equation*}
where $\widehat M_i$ stands for omission of $M_i$.
If $r=2$, this is understood to be the zero vector space.
Note that $S_i$ is a sum of $r-2$ subspaces for $r \ge 3$.
Thus another application of Lemma \ref{lem:sum_closed_spectrum_and_algebra_membership} shows that
$p_{S_i}$ and $p_{S_i \cap M_r}$ belong to $\A$.
However, care must be taken when using Proposition \ref{prop:special_representation} to estimate
$\fangle_e(M_1 + \ldots + M_{r-1}, M_r)$
since it is not obvious a priori that $p_{(M_1 + \ldots + M_{r-1}) \cap M_r}$
lies in $\A$.
Before we address this question, we record the following simple lemma for future reference.

\begin{lem}
  \label{lem:special_rep_proj_sum}
  Let $\pi= \bigoplus_{i=1}^r \pi_i$ be the representation from Proposition
  \ref{prop:special_representation}. Then for $i=1,\ldots,r-1$,
  \begin{equation*}
    \pi_i (p_{M_1 + \ldots + M_{r-1}}) = \pi_i (p_{S_i}).
  \end{equation*}
\end{lem}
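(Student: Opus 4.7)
The plan is to realize both projections $p_{M_1 + \ldots + M_{r-1}}$ and $p_{S_i}$ as a common continuous function $f$ applied to elements of $\A$ that differ only by $p_{M_i}$, and then exploit that $\pi_i(p_{M_i}) = 0$.

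Set $A = P_{M_1} + \ldots + P_{M_{r-1}}$ and $B = A - P_{M_i}$. By condition (a), both algebraic sums $M_1 + \ldots + M_{r-1}$ and $S_i$ are closed, so Lemma \ref{it:sum_closed_spectrum_and_algebra_membership_spectrum} shows that $0$ is not a cluster point of $\sigma(A)$ or of $\sigma(B)$. Choose $\varepsilon > 0$ with $\sigma(A) \cup \sigma(B) \subset \{0\} \cup [\varepsilon,\infty)$ and any continuous function $f \colon [0,\infty) \to [0,1]$ with $f(0) = 0$ and $f \equiv 1$ on $[\varepsilon,\infty)$. Then $f$ coincides with $\chi_{(0,\infty)}$ on $\sigma(A) \cup \sigma(B)$, so Lemma \ref{it:sum_closed_spectrum_and_algebra_membership_sum} gives
$$P_{M_1 + \ldots + M_{r-1}} = f(A) \quad \tand \quad P_{S_i} = f(B).$$

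The Calkin quotient is a $*$-homomorphism and therefore commutes with continuous functional calculus, so writing $a = p_{M_1} + \ldots + p_{M_{r-1}}$ and $b = a - p_{M_i}$, we obtain $p_{M_1 + \ldots + M_{r-1}} = f(a)$ and $p_{S_i} = f(b)$ in $\A$. A second application of this principle to the $*$-homomorphism $\pi_i$, together with $\pi_i(p_{M_i}) = 0$ and hence $\pi_i(a) = \pi_i(b)$, yields
$$\pi_i(p_{M_1 + \ldots + M_{r-1}}) = f(\pi_i(a)) = f(\pi_i(b)) = \pi_i(p_{S_i}).$$

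There is no real obstacle here: the only mildly delicate step is producing a single continuous $f$ that simultaneously realizes both projections via the functional calculus, and this is immediate from the closedness of the two sums, which isolates $0$ in the spectra of $A$ and $B$.
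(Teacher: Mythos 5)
Your proof is correct and is essentially the argument the paper gives: both identify $P_{M_1+\ldots+M_{r-1}}$ and $P_{S_i}$ via the functional calculus of Lemma~\ref{it:sum_closed_spectrum_and_algebra_membership_sum} applied to $a=p_{M_1}+\ldots+p_{M_{r-1}}$ and $b=a-p_{M_i}$, then push through the $*$-homomorphism $\pi_i$ using $\pi_i(p_{M_i})=0$. The only cosmetic difference is that you explicitly exhibit a continuous $f$ agreeing with $\chi_{(0,\infty)}$ on the relevant spectra, whereas the paper abuses notation and writes $\chi_{(0,\infty)}$ directly.
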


\begin{proof}
  Let $a = p_{M_1} + \ldots + p_{M_{r-1}}$ and $b = p_{M_1} + \ldots + \widehat{p_{M_i}} + \ldots
  + p_{M_{r-1}}$ (if $r=2$, we set $b=0$).
  By condition \ref{it:cond_a} and Lemma \ref{lem:sum_closed_spectrum_and_algebra_membership},
  the origin is neither a cluster point of $\sigma(a)$ nor one of $\sigma(b)$, and
  \begin{equation*}
    \chi_{(0,\infty)} (a) = p_{M_1 + \ldots + M_{r-1}} \quad \tand \quad
    \chi_{(0,\infty)} (b) = p_{S_i}.
  \end{equation*}
  The assertion therefore follows from
  the identity $\pi_i(a) = \pi_i(b)$
  and the fact that the continuous functional calculus is compatible with $*$-homomorphisms.
\end{proof}

The question whether $p_{(M_1 + \ldots + M_{r-1}) \cap M_r}$
belongs to $\A$ is more difficult.
We will see below that it can well happen that for subspaces $M$ and $N$ of a Hilbert space $H$,
the projection $p_{M \cap N}$ does not belong to the unital $C^*$-algebra generated by $p_M$ and $p_N$.
Moreover, although there is a criterion for the closedness of $M+N$ only in terms of $P_M$ and $P_N$, namely
 $M+N$ is closed if and only if the sequence $((P_M P_N)^n)_n$ is a Cauchy sequence in norm
(see Remark \ref{rem:alternating_projections}),
there cannot be such a criterion only in terms of $p_M$ and $p_N$.

\begin{exa}
  A concrete example of two closed subspaces $M$ and $N$
  of a Hilbert space $H$ such that $M+N$ is not closed can be obtained as follows (compare the discussion
  preceding Problem 52 in \cite{halmos-hspb}): Take a continuous linear
  operator $T$ on $H$ with non-closed range, and let $M$ be the graph of $T$, that is,
  \begin{equation*}
    M = \{ (x, Tx): x \in H \} \subset H \oplus H.
  \end{equation*}
  Set $N = H \oplus \{0\}$. Then $M$ and $N$ are closed, but
  \begin{equation*}
    M + N = H \oplus \ran(T)
  \end{equation*}
  is not closed. Suppose now that $T$ is additionally self-adjoint and compact. It is
  easy to check that the projection onto $M$ is given by
  \begin{equation*}
    P_M = \begin{pmatrix} (1+T^2)^{-1} & T ( 1+T^2)^{-1} \\ T(1+T^2)^{-1} & T^2 (1+T^2)^{-1}
    \end{pmatrix}.
  \end{equation*}
  Clearly,
  \begin{equation*}
    P_{N} = \begin{pmatrix} 1 & 0 \\ 0 & 0 \end{pmatrix}.
  \end{equation*}
  However, the equivalence classes $p_{M}$ and $p_N$
  of these projections in the Calkin algebra
  are the same. In particular, we see that there cannot be a criterion for the closedness
  of $M+N$ only in terms of $p_M$ and $p_N$.

  Moreover, $M \cap N = \ker(T) \oplus \{0\}$, so
  \begin{equation*}
    P_{M \cap N} = \begin{pmatrix} P_{\ker(T)} & 0 \\ 0 & 0 \end{pmatrix}.
  \end{equation*}
  Hence, if both $\ker(T)$ and $H \ominus \ker(T)$ are infinite dimensional,
  $p_{M \cap N}$ does not belong to the unital $C^*$-algebra generated by $p_M$ and
  $p_N$. For a concrete example, set $H = \ell^2(\N)$, choose a null sequence
  $(a_n)_n$ of real numbers with infinitely many zero and infinitely many non-zero terms,
  and let $T$ be componentwise multiplication with $(a_n)_n$.
\end{exa}

In the presence of conditions \ref{it:cond_a} and \ref{it:cond_b}, the situation is better.
\begin{lem}
  \label{lem:proj_in}
  Under the above hypotheses,
  $p_{(M_1 + \ldots + M_{r-1}) \cap M_r} \in \A$. Moreover, if $\pi = \bigoplus_{i=1}^r \pi_i$
  is the faithful representation from Proposition \ref{prop:special_representation}, we have
  \begin{equation*}
    \pi_i (p_{(M_1 + \ldots + M_{r-1}) \cap M_r}) = \pi_i (p_{S_i \cap M_r}),
  \end{equation*}
  for $i=1,\ldots,r-1$, and $\pi_r(p_{(M_1 + \ldots + M_{r-1})\cap M_r})=0$.
\end{lem}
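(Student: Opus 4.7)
The plan is to construct $q := \lim_n (p_L p_{M_r} p_L)^n$ inside $\mathcal{A}$, where $L := M_1 + \cdots + M_{r-1}$, and then identify $q$ with $p_{L \cap M_r}$ in the Calkin algebra; this will yield both $p_{L \cap M_r} \in \mathcal{A}$ and the asserted $\pi_i$-identities.

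First, I would show that $(p_L p_{M_r} p_L)^n$ is norm-convergent in $\mathcal{A}$ by testing it against the faithful representation $\pi = \bigoplus_{i=1}^r \pi_i$ from Proposition \ref{prop:special_representation}. For $i < r$, Lemma \ref{lem:special_rep_proj_sum} gives $\pi_i(p_L) = \pi_i(p_{S_i})$, so $\pi_i((p_L p_{M_r} p_L)^n) = \pi_i((p_{S_i} p_{M_r} p_{S_i})^n)$. Condition \ref{it:cond_a} makes $S_i + M_r$ closed (being a sum of at most $r-1$ subspaces), so Lemma \ref{lem:sum_closed_spectrum_and_algebra_membership} places $p_{S_i \cap M_r}$ inside $\mathcal{A}$ and provides $(p_{S_i} p_{M_r} p_{S_i})^n \to p_{S_i \cap M_r}$ in norm; applying $\pi_i$ yields $\pi_i((p_L p_{M_r} p_L)^n) \to \pi_i(p_{S_i \cap M_r})$. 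For $i = r$, $\pi_r(p_{M_r}) = 0$ makes the sequence identically zero for $n \geq 1$. Since $\pi$ is isometric on $\mathcal{A}$, this componentwise norm-convergence upgrades to norm-convergence in $\mathcal{A}$ to a projection $q$ satisfying $\pi_i(q) = \pi_i(p_{S_i \cap M_r})$ for $i < r$ and $\pi_r(q) = 0$. Moreover, the convergence is geometric: $\|(p_L p_{M_r} p_L)^n - q\| \leq \alpha^n$ where $\alpha := \max_{i<r} \fangle_e(S_i, M_r)^2 < 1$ by Lemma \ref{lem:sum_closed_ess_angle}.

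Next, I identify $q$ with $p_{L \cap M_r}$ in the Calkin algebra. The direction $p_{L \cap M_r} \leq q$ is immediate: $P_{L \cap M_r} \leq P_L, P_{M_r}$ yields $(P_L P_{M_r} P_L)^n P_{L \cap M_r} = P_{L \cap M_r}$, which passes to $q\, p_{L \cap M_r} = p_{L \cap M_r}$. For the reverse, set $e := q - p_{L \cap M_r}$, a projection in the Calkin algebra dominated by $p_L$ and $p_{M_r}$ with $e\, p_{L \cap M_r} = 0$. Taking a projection lift $\tilde E$, one has $P_L \tilde E \equiv P_{M_r} \tilde E \equiv \tilde E$ and $P_{L \cap M_r} \tilde E \equiv 0$ modulo $\K(H)$, which iterates to $\tilde E \equiv (P_L P_{M_r} P_L)^n \tilde E (P_L P_{M_r} P_L)^n \pmod{\K(H)}$ for every $n$. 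The strong-operator convergence $(P_L P_{M_r} P_L)^n \to P_{L \cap M_r}$ (Remark \ref{rem:alternating_projections}) together with the geometric norm-rate $\alpha^n$ from Step 1 should then force the compact residuals to converge in norm, concluding that $\tilde E$ itself is compact and hence $e = 0$.

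The main obstacle will be the final compactness conclusion. Strong-operator limits of bounded sequences of compact operators are not compact in general, so the argument cannot rely on SOT considerations alone. The decisive ingredient is the geometric norm-rate $\alpha^n$ from Step 1: it translates the Calkin-Cauchy structure of $(p_L p_{M_r} p_L)^n$ into concrete norm-estimates on the compact residuals in $B(H)$, so that the compact residual must converge in norm---not merely strongly---to the operator $\tilde E - P_{L \cap M_r} \tilde E P_{L \cap M_r}$, which is therefore itself compact.
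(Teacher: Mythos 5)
Your first step — constructing $q := \lim_n (p_L p_{M_r} p_L)^n$ inside $\A$ via the faithful representation $\pi$, with $\pi_i(q) = \pi_i(p_{S_i\cap M_r})$ for $i<r$, $\pi_r(q)=0$, and $q \ge p_{L\cap M_r}$ — is correct and coincides with the first half of the paper's proof (the element called $q_u$ there). The problem is the reverse inequality, and the gap you flag in your own last paragraph is real and is not repaired by the geometric rate.

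The rate $\|(p_L p_{M_r}p_L)^n - q\| \le \alpha^n$ is an estimate in the \emph{Calkin} norm. Unwinding it only yields compact operators $C_n$ and remainders $R_n$ with $\|R_n\|\le\alpha^n$ and $T_n = \tilde Q + C_n + R_n$, where $T_n := (P_L P_{M_r} P_L)^n$ and $\tilde Q$ is a lift of $q$; it provides \emph{no} bound whatsoever on $\|C_n\|$. Consequently, from $\tilde E - T_n\tilde E T_n \in \K(H)$ together with Calkin-geometric convergence you cannot deduce that the sequence $T_n\tilde E T_n$ is norm-Cauchy. All you get for free is SOT convergence $T_n\tilde E T_n \to P_{L\cap M_r}\tilde E P_{L\cap M_r}$, and a uniformly bounded sequence of compacts can converge SOT to a non-compact operator (finite-rank projections converging to the identity being the basic example). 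So the inference ``$\tilde E$ is compact'' is unjustified as written, and I do not see how to extract a norm bound on the compact residuals from the data you have. This is exactly the difficulty the paper's proof is designed to sidestep.

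The paper proceeds differently in the second half: rather than trying to show $q \le p_{L\cap M_r}$ directly, it constructs a \emph{lower} bound $q_l = \chi_{(0,\infty)}\bigl(p_{S_1\cap M_r}+\ldots+p_{S_{r-1}\cap M_r}\bigr) \in \A$, verifies $q_l \le p_{L\cap M_r}$ using Lemma~\ref{it:C_alg_gelfand_consequences_increasing} (since $\sum_i p_{S_i\cap M_r}\le (r-1)p_{L\cap M_r}$ and the two commute), and then shows $q_l = q_u$ by checking $\pi_i(q_l)=\pi_i(q_u)$ componentwise — reusing Proposition~\ref{prop:special_representation} and Lemma~\ref{lem:sum_closed_spectrum_and_algebra_membership} rather than lifting to $B(H)$. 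The sandwich $q_l \le p_{L\cap M_r}\le q_u$ with $q_l = q_u$ then pins $p_{L\cap M_r}$ inside $\A$ without ever needing to control individual compact perturbations. You should replace your Step 2 with this sandwich argument, or find a genuinely new way to bound the compact residuals — the Calkin rate alone does not do it.
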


\begin{proof}
  If $r=2$, condition \ref{it:cond_b} asserts that $P_{M_1} P_{M_2}$ is a compact operator. Since $P_{M_1 \cap M_2}
  = P_{M_1} P_{M_2} P_{M_1 \cap M_2}$, we conclude that $p_{M_1 \cap M_2} = 0$, so the statement is
  trivial for $r = 2$.

  Now, let us assume that $r \ge 3$ and define
  \begin{equation*}
    S = M_1 + \ldots + M_{r-1}.
  \end{equation*}
  In a first step, we show that the sequence $((p_{M_r} p_{S} p_{M_r})^n)_n$
  converges to an element $q_u \in \A$ with $q_u \ge p_{S \cap M_r}$.
  To this end, let $\pi = \bigoplus_{i=1}^r \pi_i$ be the faithful
  representation from Proposition \ref{prop:special_representation}.
  By Lemma \ref{lem:special_rep_proj_sum}, we have $\pi_i (p_S) = \pi_i(p_{S_i})$ for each $i$.
  Since $S_i + M_r$ is closed, Lemma
  \ref{it:sum_closed_spectrum_and_algebra_membership_intersection}
  shows that for $i=1,\ldots,r-1$,
  \begin{equation}
    \label{eqn:lem_proj_in_1}
    \pi_i \big( (p_{M_r} p_{S} p_{M_r})^n \big)
    = \pi_i \big( (p_{M_r} p_{S_i} p_{M_r})^n \big) \xrightarrow{n \to \infty}
    \pi_i (p_{S_i \cap M_r}).
  \end{equation}
  Clearly, $\pi_r (p_{M_r} p_S p_{M_r}) = 0$.
  Since $\pi = \bigoplus_{i=1}^r \pi_i$ is a faithful representation,
  we conclude that $ ((p_{M_r} p_S p_{M_r})^n)_n$ is a Cauchy sequence in $\A$.
  Denoting its limit by $q_u$, we see from
  \begin{equation*}
    (p_{M_r} p_S p_{M_r})^n - p_{S \cap M_r}
    = (p_{M_r} p_S p_{M_r} - p_{S \cap M_r})^n \ge 0
  \end{equation*}
  for all $n \in \N$
  that $q_u \ge p_{S \cap M_r}$.

  The next step is to prove that $0$ is not a cluster point of the spectrum of the positive
  element $a=p_{S_1 \cap M_r} + \ldots + p_{S_{r-1} \cap M_r} \in \A$, and that
  \begin{equation*}
    q_l = \chi_{(0,\infty)} (a) \le p_{S \cap M_r}.
  \end{equation*}
  To this end, we fix an $i \in \{1,\ldots,r-1\}$, and for $j=1,\ldots,r-1$ with $j \neq i$,
  we set
  \begin{equation*}
    N_j = M_1 + \ldots + \widehat M_i + \ldots + \widehat M_j + \ldots + M_{r-1} \subset S_i,
  \end{equation*}
  which is understood as the zero vector space if $r=3$. Clearly, $N_j$ is closed by condition \ref{it:cond_a}.
  Then $p_{N_j} \in \A$, and
  just as in the proof of Lemma \ref{lem:special_rep_proj_sum}, we see that
  $\pi_i(p_{S_j}) = \pi_i(p_{N_j})$.
  Since $N_j + M_r$ and $S_j + M_r$ are closed by condition \ref{it:cond_a}, an application of
  Lemma \ref{it:sum_closed_spectrum_and_algebra_membership_intersection} yields that $p_{N_j \cap M_r}$ belongs to $\A$
  and that
  $\pi_i( p_{S_j \cap M_r}) = \pi_i(p_{N_j \cap M_r})$.
  Therefore,
  \begin{equation*}
    \pi_i(a)
    = \pi_i (p_{N_{1} \cap M_r} + \ldots + p_{N_{i-1} \cap M_r} + p_{S_i \cap M_r} +
    p_{N_{i+1} \cap M_r} + \ldots + p_{N_{r-1} \cap M_r}).
  \end{equation*}
  Using the fact that the algebraic sum
  \begin{equation*}
    N_1 \cap M_r + \ldots + N_{i-1} \cap M_r + S_i \cap M_r + N_{i+1} \cap M_r + \ldots + N_{r-1} \cap M_r
  \end{equation*}
  equals $S_i \cap M_r$ and is therefore evidently closed,
  we conclude with the help of Lemma
  \ref{it:sum_closed_spectrum_and_algebra_membership_spectrum}
  that $0$ is not a cluster point of $\sigma( \pi_i(a))$, and that
  \begin{equation}
    \label{eqn:lem_proj_in_2}
    \chi_{(0,\infty)} (\pi_i(a)) = \pi_i (p_{S_i \cap M_r}).
  \end{equation}
  Since
  $\pi_r(a) = 0$, and since $\pi = \bigoplus_{i=1}^r \pi_i$ is a faithful
  representation of $\A$, it follows that $0$ is not a cluster point of $\sigma(a)$.
  Thus, we can define
  \begin{equation*}
    q_l = \chi_{(0,\infty)}(a) \in \A.
  \end{equation*}
  To prove the asserted inequality,
  we note that $a \le (r-1) \, p_{S \cap M_r}$, and that
  $a$ and $p_{S \cap M_r}$ commute. Hence Lemma
  \ref{it:C_alg_gelfand_consequences_increasing} shows that
  \begin{equation*}
    q_l \le \chi_{(0,\infty)} ( (r-1) \, p_{S \cap M_r})
    = p_{S \cap M_r}.
  \end{equation*}

  We have established the following situation so far:
  \begin{equation*}
    q_l \le p_{(M_1 + \ldots + M_{r-1}) \cap M_r} \le q_u,
  \end{equation*}
  and $q_l$ and $q_u$ belong to $\A$. We now finish the proof of $p_{(M_1 + \ldots + M_{r-1}) \cap M_r} \in \A$ by showing that $q_l = q_u$.
  Using once again the representation from Proposition \ref{prop:special_representation},
  it suffices to show that $\pi_i(q_l) = \pi_i(q_u)$ for $i=1,\ldots,r$.
  This is obvious for $i=r$, because $\pi_r(q_l) = 0 = \pi_r(q_u)$.
  So let $i \in \{1,\ldots,r-1\}$. According to equation \eqref{eqn:lem_proj_in_1},
  we have $\pi_i(q_u) = \pi_i(p_{S_i \cap M_r})$, while equation \eqref{eqn:lem_proj_in_2}
  shows that $\pi_i(q_l) = \pi_i (p_{S_i \cap M_r})$, as desired. The additional assertion is now obvious.
\end{proof}

We are now in the position to prove the main theorem of this section.
\begin{thm}
  \label{thm:sum_closed}
  Let $H$ be a Hilbert space, let $r \ge 2$ and let $M_1, \ldots ,M_r \subset H$ be closed subspaces
  such that the following two conditions hold:
  \begin{enumerate}[label=\normalfont{(\alph*)}]
  \item Any algebraic sum of $r-1$ or fewer subspaces of the $M_i$ is closed, that is, for any subset
  $\{i_1,\ldots,i_k\} \subset \{1,\ldots,r\}$ with $k \le r-1$,
  the sum
  \begin{equation*}
    M_{i_1} + \ldots + M_{i_k}
  \end{equation*}
  is closed.
  \item Any product of the $P_{M_i}$ containing each $P_{M_i}$ at least once is compact, that is,
    for any collection of (not necessarily distinct) indices $i_1,\ldots,i_k$ with $\{i_1,\ldots,i_k \}
    = \{1,\ldots,r\}$, the operator
    \begin{equation*}
      P_{M_{i_1}} P_{M_{i_2}} \ldots P_{M_{i_k}}
    \end{equation*}
    is compact.
  \end{enumerate}
  Then the algebraic sum $M_1 + \ldots + M_r$ is closed.
\end{thm}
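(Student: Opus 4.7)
The plan is to apply the essential Friedrichs angle machinery to the pair $S = M_1 + \ldots + M_{r-1}$ and $M_r$. By condition (a), $S$ is already closed, so by Lemma \ref{lem:sum_closed_ess_angle} it suffices to show that $\fangle_e(S, M_r) < 1$, that is,
\begin{equation*}
  \|p_S p_{M_r} - p_{S \cap M_r}\| < 1.
\end{equation*}
First I would verify that both $p_S$ and $p_{S \cap M_r}$ lie in the $C^*$-algebra $\A$ generated by $p_{M_1}, \ldots, p_{M_r}$: membership of $p_S$ follows from condition (a) and Lemma \ref{it:sum_closed_spectrum_and_algebra_membership_sum}, while $p_{S \cap M_r} \in \A$ is exactly Lemma \ref{lem:proj_in}. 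Hence the whole element $x = p_S p_{M_r} - p_{S \cap M_r}$ lives in $\A$, and we may estimate its norm using the faithful representation $\pi = \bigoplus_{i=1}^r \pi_i$ provided by Proposition \ref{prop:special_representation}.

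Faithfulness gives $\|x\| = \max_{1 \le i \le r} \|\pi_i(x)\|$, so it remains to bound $\|\pi_i(x)\|$ uniformly away from $1$. For $i = r$, we have $\pi_r(p_{M_r}) = 0$ by construction and $\pi_r(p_{S \cap M_r}) = 0$ by the additional assertion of Lemma \ref{lem:proj_in}, so $\pi_r(x) = 0$. For $i = 1, \ldots, r-1$, combining Lemma \ref{lem:special_rep_proj_sum} with Lemma \ref{lem:proj_in} yields
\begin{equation*}
  \pi_i(x) = \pi_i(p_{S_i} p_{M_r} - p_{S_i \cap M_r}),
\end{equation*}
where $S_i = M_1 + \ldots + \widehat{M_i} + \ldots + M_{r-1}$. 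Now $S_i + M_r$ is a sum of at most $r-1$ of the $M_j$, which is closed by condition (a). Therefore Lemma \ref{lem:sum_closed_ess_angle} applies to the pair $(S_i, M_r)$, giving
\begin{equation*}
  \|\pi_i(x)\| \le \|p_{S_i} p_{M_r} - p_{S_i \cap M_r}\| = \fangle_e(S_i, M_r) < 1.
\end{equation*}

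Since there are only finitely many indices $i$, taking the maximum yields $\fangle_e(S, M_r) < 1$, and Lemma \ref{lem:sum_closed_ess_angle} then produces the desired closedness of $S + M_r = M_1 + \ldots + M_r$. The conceptual core of the argument — and arguably the delicate step — is really already packaged in Lemma \ref{lem:proj_in}: without knowing a priori that $p_{S \cap M_r}$ belongs to $\A$ and can be computed componentwise under $\pi$, there would be no way to reduce the essential angle $\fangle_e(S, M_r)$ to the essential angles $\fangle_e(S_i, M_r)$ obtained from the inductive hypothesis. Once that identification is in hand, the present theorem becomes a clean inductive assembly through the faithful block representation.
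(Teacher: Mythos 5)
Your argument coincides essentially line-by-line with the paper's own proof: both pass to the $C^*$-algebra $\A \subset$ Calkin algebra, decompose via the faithful representation $\pi = \bigoplus \pi_i$ of Proposition \ref{prop:special_representation}, and use Lemmas \ref{lem:special_rep_proj_sum} and \ref{lem:proj_in} to reduce $\fangle_e(M_1 + \ldots + M_{r-1}, M_r)$ to a maximum of the angles $\fangle_e(S_i, M_r)$, each strictly less than $1$ by condition (a) and Lemma \ref{lem:sum_closed_ess_angle}. The proposal is correct and follows the same route.
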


\begin{proof}
  As above, let $\A$ be the unital $C^*$-algebra generated by $p_{M_1}, \ldots, p_{M_r}$,
  and let
  $\pi = \bigoplus_{i=1}^r \pi_i$ be the faithful representation from Proposition
  \ref{prop:special_representation}.
  By the discussion preceding Lemma \ref{lem:special_rep_proj_sum}, the elements $p_{S_i}$ and $p_{S_i \cap M_r}$,
  as well as $p_{M_1 + \ldots + M_{r-1}}$, all belong to $\A$ for $i=1,\ldots,r-1$.
  According to Lemma \ref{lem:proj_in}, this is also true for $p_{(M_1 + \ldots + M_{r-1}) \cap M_r}$,
  and
  \begin{equation*}
    \pi_i (p_{(M_1 + \ldots + M_{r-1}) \cap M_r}) = \pi_i(p_{S_i \cap M_r})
    \quad \text{ for } i=1,\ldots,r-1.
  \end{equation*}
  Moreover, for these $i$, we have $\pi_i (p_{M_1 + \ldots + M_{r-1}}) = \pi_i (p_{S_i})$ by Lemma \ref{lem:special_rep_proj_sum}.
  Combining these results, we obtain
  \begin{align*}
    ||\pi_i(p_{M_1 + \ldots + M_{r-1}} p_{M_r} - p_{(M_1 + \ldots + M_{r-1}) \cap M_r})||
    &= ||\pi_i(p_{S_i} p_{M_r} - p_{S_i \cap M_r})|| \\
    &\le \fangle_e(S_i,M_r).
  \end{align*}
  Since $\pi_r (p_{M_r}) = 0 = \pi_r(p_{(M_1 + \ldots + M_{r-1})\cap M_r})$, we conclude that
  \begin{align*}
    \fangle_e(M_1 + \ldots +M_{r-1}, M_r)
    &= ||p_{M_1 + \ldots + M_{r-1}} p_{M_r} - p_{(M_1 + \ldots + M_{r-1}) \cap M_r}|| \\
    &\le \max_{1 \le i \le r-1} \fangle_e(S_i,M_r) < 1
  \end{align*}
  because $S_i + M_r$ is closed for each $i$ by condition \ref{it:cond_a}.
\end{proof}

The desired result about sums of Fock spaces follows now by a straightforward inductive argument.
\begin{cor}
  \label{cor:Fock_sum_closed}
  Let $V_1, \ldots, V_r \subset \C^d$ be subspaces. Then the algebraic sum
  \begin{equation*}
    \F(V_1) + \ldots + \F(V_r) \subset \F(\C^d)
  \end{equation*}
  is closed.
\end{cor}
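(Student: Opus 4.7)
The plan is to induct on $r$, using Theorem \ref{thm:sum_closed} as the engine of the inductive step and Lemma \ref{lem:angle_several_sum_perp} to dispose of any non-trivial joint intersection. The base case $r = 1$ is trivial, since $\F(V_1)$ is already closed.

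For the inductive step, suppose the conclusion holds for every collection of fewer than $r$ subspaces, and let $V_1, \ldots, V_r \subset \C^d$ be given. Set $V = V_1 \cap \ldots \cap V_r$. If $V \neq \{0\}$, the inductive hypothesis applied to the $r-1$ subspaces $V_1, \ldots, V_{r-1}$ and to the $r-1$ subspaces $V_1 \ominus V, \ldots, V_{r-1} \ominus V$ shows that both $\F(V_1) + \ldots + \F(V_{r-1})$ and $\F(V_1 \ominus V) + \ldots + \F(V_{r-1} \ominus V)$ are closed. Lemma \ref{lem:angle_several_sum_perp} then reduces the original problem to the closedness of $\F(V_1 \ominus V) + \ldots + \F(V_r \ominus V)$. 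Since $\bigcap_{i=1}^r (V_i \ominus V) = \{0\}$, it suffices to treat the case of trivial joint intersection.

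Assume therefore that $V_1 \cap \ldots \cap V_r = \{0\}$, and apply Theorem \ref{thm:sum_closed} to the closed subspaces $M_i = \F(V_i)$. Condition (a) is immediate from the inductive hypothesis, because any sum of at most $r-1$ of the $M_i$ is a sum of at most $r-1$ full Fock spaces. For condition (b), let $j_1, \ldots, j_m$ be indices with $\{j_1, \ldots, j_m\} = \{1, \ldots, r\}$, and observe that
\begin{equation*}
  P_{M_{j_1}} \cdots P_{M_{j_m}} = \bigoplus_{n=0}^\infty (P_{V_{j_1}} \cdots P_{V_{j_m}})^{\otimes n}.
\end{equation*}
Since $V_{j_1} \cap \ldots \cap V_{j_m} = V_1 \cap \ldots \cap V_r = \{0\}$, Lemma \ref{it:proj_intersection_point_spectrum_fin_dim} yields $\|P_{V_{j_1}} \cdots P_{V_{j_m}}\| < 1$. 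Hence the tensor powers have geometrically decaying norms, each summand is a finite-rank operator on a finite-dimensional space, and truncation produces a sequence of finite-rank operators converging in norm to $P_{M_{j_1}} \cdots P_{M_{j_m}}$. Therefore this product is compact, Theorem \ref{thm:sum_closed} applies, and $\F(V_1) + \ldots + \F(V_r)$ is closed.

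The only point that is not entirely routine is the verification of condition (b), but it is essentially a rerun of the proof of Lemma \ref{lem:proj_product_compact}: that argument uses only triviality of the joint intersection of the subspaces appearing in the product, which is preserved when the $V_i$ are replaced by the (possibly repeated) tuple $V_{j_1}, \ldots, V_{j_m}$. No fundamentally new ingredient is needed beyond the machinery built up in Sections 4 and 5.
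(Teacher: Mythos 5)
Your proof is correct and follows essentially the same route as the paper: induction on $r$, Lemma \ref{lem:angle_several_sum_perp} to reduce to the case $V_1 \cap \ldots \cap V_r = \{0\}$, and Theorem \ref{thm:sum_closed} applied to $M_i = \F(V_i)$. You are in fact slightly more careful than the paper's text at one point: the paper simply cites Lemma \ref{lem:proj_product_compact} for condition (b), although as stated that lemma only covers the single product $P_{M_1}\cdots P_{M_r}$ with each factor appearing once; your explicit rerun of its argument for an arbitrary product whose indices cover $\{1,\ldots,r\}$ (with possible repetition, still giving trivial joint intersection) is exactly what condition (b) requires and what the paper implicitly intends.
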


\begin{proof}
  We prove the result by induction on $r$, noting that the case $r=1$ is trivial. So suppose that $r \ge 2$ and
  that the assertion has been proved for $k \le r-1$.
  In order to show that sums of $r$ Fock spaces $\F(V_1) , \ldots, \F(V_r)$ are closed,
  it suffices to consider the case where
  \begin{equation*}
    V_1 \cap \ldots \cap V_r = \{0\}
  \end{equation*}
  by Lemma \ref{cor:Fock_closed_reduction}.
  Let $M_i = \F(V_i)$ for each $i$.
  As an application of Lemma \ref{lem:proj_product_compact}, we see that
  condition \ref{it:cond_b} of the preceding theorem is satisfied,
  whereas condition \ref{it:cond_a} holds by the inductive hypothesis. Thus the assertion follows from the preceding theorem.
\end{proof}

In the terminology of the second section, this result, combined
with Lemma \ref{lem:full_Fock_good}, shows that every radical homogeneous ideal is \admideal.
Hence,
Proposition \ref{prop:good_bounded_maps} and Corollary \ref{cor:good_algebra_iso} hold
without the additional hypotheses on $I$ and $J$.
We thus obtain the following generalization of \cite[Theorem 8.5]{davramshal}.

\begin{thm}
  Let $I$ and $J$ be radical homogeneous ideals in $\polyring{d}$ and $\polyring{d'}$, respectively.
  The algebras $\A_I$ and $\A_J$ are isomorphic if and only if there exist linear maps
  $A: \C^{d'} \to \C^{d}$ and $B: \C^{d} \to \C^{d'}$ which restrict to mutually inverse
  bijections $A: Z(J) \to Z(I)$ and $B: Z(I) \to Z(J)$. \qed
\end{thm}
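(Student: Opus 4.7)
The forward implication ($\Rightarrow$) was already established in \cite{davramshal}: any topological isomorphism between $\A_I$ and $\A_J$ produces such a pair of linear maps $A$ and $B$ restricting to mutually inverse bijections between $Z(J)$ and $Z(I)$. So my plan is to focus exclusively on the reverse implication ($\Leftarrow$), and the strategy is to show that the ad-hoc notion of \emph{admissibility} introduced in Section 2 is actually vacuous, i.e.\ holds for every radical homogeneous ideal. Once this is in hand, the conclusion falls out of Corollary \ref{cor:good_algebra_iso} essentially for free.

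Concretely, suppose that we are given linear maps $A: \C^{d'} \to \C^d$ and $B: \C^d \to \C^{d'}$ restricting to mutually inverse bijections $A: Z(J) \to Z(I)$ and $B: Z(I) \to Z(J)$. I will verify that both $I$ and $J$ are \admideal. Pick a radical homogeneous ideal, say $I$, and write its vanishing locus as the irreducible decomposition $V(I) = W_1 \cup \ldots \cup W_r$; let $V_k = \spa(W_k)$ and let $\widehat{I}_k$ be the vanishing ideal of $V_k$. By Lemma \ref{lem:full_Fock_good}, admissibility of $I$ is equivalent to closedness of the algebraic sum of the \emph{full} Fock spaces $\fock{V_1} + \ldots + \fock{V_r}$ inside $\fock{\C^d}$ (this is where the passage through the anti-unitary identification $U: H^2_d \to \symfock{\C^d}$ together with compatibility of the symmetrizing projection with the subspace projections $P_{\fock{V_k}}$ is crucial). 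But this closedness is precisely the content of Corollary \ref{cor:Fock_sum_closed}, which is applicable to an arbitrary finite family of subspaces of $\C^d$. The same argument applies to $J$.

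Having established admissibility of both $I$ and $J$, I invoke Corollary \ref{cor:good_algebra_iso} directly: it delivers bounded maps $\compmap{A}: \F_J \to \F_I$ and $\compmap{B}: \F_I \to \F_J$ which are inverse to each other, and the conjugation $\Phi: \A_I \to \A_J,\ T \mapsto (\compmap{A})^* T (\compmap{B})^*$, is a completely bounded algebra isomorphism (in fact realized on functions by composition with $A$). This completes the proof. The main obstacle in the whole program was, of course, Corollary \ref{cor:Fock_sum_closed}, whose proof occupied Sections 3--5; modulo that result, the present theorem is a purely formal synthesis of Lemma \ref{lem:full_Fock_good} and Corollary \ref{cor:good_algebra_iso} with the result of \cite{davramshal}.
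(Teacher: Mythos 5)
Your proof is correct and follows exactly the paper's route: the forward direction is imported from Davidson--Ramsey--Shalit, and the reverse direction synthesizes Corollary \ref{cor:Fock_sum_closed} with Lemma \ref{lem:full_Fock_good} to conclude that every radical homogeneous ideal is \admideal, after which Corollary \ref{cor:good_algebra_iso} yields the completely bounded isomorphism. This is precisely the synthesis the paper performs immediately after Corollary \ref{cor:Fock_sum_closed}.
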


\begin{rem*}
  Using Corollary \ref{cor:good_algebra_iso} in place of \cite[Theorem 7.17]{davramshal},
  we also see that the hypothesis of the ideals being tractable can be removed from Corollary 9.7
  and Theorem 11.7 (b) in \cite{davramshal}.
\end{rem*}

\subsection*{Acknowledgements} The author wishes to thank Ken Davidson
for valuable discussions, and for the kind hospitality provided during
the author's stay at the University of Waterloo.
Moreover, he is grateful to his Master's thesis advisor J\"org Eschmeier for his advice and support.

\end{document}